\documentclass[a4paper,11pt]{article}
\usepackage{setspace}
\usepackage{diagrams}
\usepackage{geometry}
\usepackage{epsfig}
\usepackage{amsfonts}
\usepackage{amsthm}
\usepackage{latexsym}
\usepackage{amsmath}
\usepackage{amscd}
\usepackage{amssymb}
\usepackage{enumerate}
\usepackage{graphicx,oldgerm}

\theoremstyle{definition}
\newtheorem{thm}{Theorem}[section]
\newtheorem{cor}[thm]{Corollary}
\newtheorem{lem}[thm]{Lemma}

\newtheorem{prop}[thm]{Proposition}
\newtheorem{defi}[thm]{Definition}
\newtheorem{rem}[thm]{Remark}
\newtheorem{note}[thm]{Notation}
\newtheorem{para}[thm]{}

\DeclareMathOperator{\codim}{\mathrm{codim}}
\DeclareMathOperator{\NL}{\mathrm{NL}}
\DeclareMathOperator{\prim}{\mathrm{prim}}

\DeclareMathOperator{\red}{\mathrm{red}}

\DeclareMathOperator{\Hc}{\mathcal{H}om}

\DeclareMathOperator{\Ima}{\mathrm{Im}}

\DeclareMathOperator{\op}{\mathcal{O}_{\mathbb{P}^3}(d)}

\DeclareMathOperator{\p3}{\mathbb{P}^3}

\DeclareMathOperator{\pr}{\mathrm{pr}}

\DeclareMathOperator{\Spec}{\mathrm{Spec}}

\DeclareMathOperator{\N}{\mathcal{N}}
\DeclareMathOperator{\T}{\mathcal{T}}

\DeclareMathOperator{\I}{\mathcal{I}}
\DeclareMathOperator{\mo}{\mathcal{O}}

\DeclareMathOperator{\coker}{\mathrm{coker}}

\newcommand{\mb}[1]{\mathbb{#1}}
\newcommand{\mc}[1]{\mathcal{#1}}
\newcommand{\mr}[1]{\mathrm{#1}}
\newcommand{\ov}[1]{\overline{#1}}
%opening
\title{On generically non-reduced components of Hilbert schemes of smooth curves}
\author{Ananyo Dan\thanks{The author has been supported by the DFG under Grant KL-$2244/2-1$\\ \newline Tata Institute for fundamental research, Homi Bhabha road, Mumbai-400005, Maharashtra, India
, e-mail: dan@math.tifr.res.in\\Mathematics Subject Classification ($2010$): $14$C$30$, $14$D$07$, $13$D$10$, $14$C$05$, $32$J$25$, $32$G$20$\\Keywords: Hodge locus, non-reduced scheme, Hilbert scheme of curves, Hilbert flag scheme, N\'{e}ron-Severi group }}
\date{\today}

\begin{document}
\maketitle
\begin{abstract}
 A classical example of Mumford gives a generically non-reduced component of the Hilbert scheme of smooth curves in $\p3$ such that a general element of the component is contained in a smooth cubic hypersurface in $\p3$. 
 In this article we use techniques from Hodge theory to give further examples of such (generically non-reduced) components of Hilbert schemes of smooth curves without any restriction on the degree of the hypersurface containing it.
 As a byproduct we also obtain generically non-reduced components of certain Hodge loci.
\end{abstract}

\section{Introduction}

In $1962$, only a few years after Grothendieck introduced the Hilbert scheme, Mumford \cite{Mu1} 
showed that there exists a generically non-reduced (in the sense, the localization of the structure sheaf at every point contains a non-trivial nilpotent element) irreducible component of the Hilbert scheme of curves in $\p3$
such that a general element of the component is a smooth curve contained in a cubic surface in $\p3$. This example inspired the investigation of such components.
Kleppe showed in \cite{kle2} that an irreducible component $L$ of the Hilbert scheme of curves parametrizing smooth curves contained in a cubic surface in $\p3$ is non-reduced if and 
only if for a general $C \in L$ and a smooth cubic surface $X$ containing $C$, $h^1(\mo_X(-C)(3)) \not=0$. Using this condition he gave examples in \cite{K1} of such non-reduced components.
Recently, Kleppe and Ottem in \cite{kle2012} have further generalized these techniques to study generically non-reduced components of Hilbert scheme of smooth curves contained in hypersurfaces in $\p3$ of degree at most $5$.

The approach by Mumford, Kleppe and Ottem is based on the study of flag Hilbert schemes. It is not clear whether their methods generalize to Hilbert schemes of curves not contained in smooth surfaces of small degree. 
One of the main problems is, these methods rely on a simple description of the map $H^1(\mo_X(C)) \to H^1(\N_{C|X})$ coming from the short exact sequence \[0 \to \mo_X \to \mo_X(C) \to \N_{C|X} \to 0.\]
In particular, in the case when $X$ is of degree $3$ or $4$, $h^1(\N_{C|X})=0$ (use Serre duality followed by the adjunction formula). Using this fact simplifies several computations coming from the study of flag Hilbert schemes (see \cite[\S $1.3$]{kle2}).
Unfortunately, such a simple description of the map from $H^1(\mo_X(C))$ to $H^1(\N_{C|X})$ is not available if $\deg(X)$ is greater than $5$.
In the case $\deg(X)=5$, they instead assume that the general element of the (generically non-reduced) component (of the Hilbert scheme of curves) satisfies the property: a general surface containing the curve has Picard number $2$ (see \cite{kle2012}).
Therefore, their methods cannot be used to study components of Hilbert schemes of curves whose general element is not contained in a surface of degree smaller than $5$ or if the Picard number of the surface is strictly greater than $2$.

We overcome the difficulty described above by combining analysis from flag Hilbert schemes with the theory of Hodge loci.
As an intermediate step we also obtain new generically non-reduced components of the Hodge locus (see Theorem \ref{a73}).
Owing to the linearity of the Gauss-Manin connection, it is easier to study deformation of generically non-reduced curves using this approach compared to simply using 
standard deformation theory.
The main result in this article is:
\begin{thm}\label{phe3}
For $d \ge 5$ and $m \ge 2d-2$, there exists a generically non-reduced irreducible component of the Hilbert scheme parametrizing smooth curves in 
$\p3$: 
\begin{enumerate}
 \item of degree $md+3$ and the arithemtic genus $1+(1/2)(md^2+d(m^2-4m-2)+6m+2)$,
 \item a generic element in this component corresponds to a smooth curve contained in a smooth degree $d$ surface in $\p3$ with Picard number (rank of N\'{e}ron Severi group)
 at least $3$ and not contained in any smooth surface of smaller degree. 
\end{enumerate}
\end{thm}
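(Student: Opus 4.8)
The plan is to deduce Theorem \ref{phe3} as a direct application of the cohomological criterion of the first main theorem (Theorems \ref{mumsur01} and \ref{mas1}) to the explicit non-reduced Noether-Lefschetz classes produced in \cite{D2}. Fix $d \ge 5$. The single external input I would use is \cite[Theorems $6.16$, $6.17$]{D2}: this furnishes a smooth degree $d$ surface $X \subset \p3$ together with an integral Hodge class $\gamma \in H^{1,1}(X, \mb{Z})$, not a multiple of $c_1(\mo_X(1))$, whose Noether-Lefschetz locus has $\ov{\NL(\gamma)}$ irreducible and generically non-reduced. With this class in hand the theorem reduces to checking that the hypotheses of the first main theorem hold and to translating its abstract conclusion into the stated numerical invariants.

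For the geometric input I would invoke the \emph{Furthermore} clause of the first main theorem: for every $m \gg d$ it supplies a smooth, semi-regular curve $C$ on $X$ with $\gamma - [C]$ a multiple of $H_X$ --- concretely $[C] = \gamma + m H_X$, so that $\gamma - [C] = -m H_X$ --- and with $C$ not contained in any surface of degree less than $d$. The last property is exactly conclusion (2) of the statement, so no further work is needed there. It remains to verify the two vanishings $h^1(\mo_X(-C)(d)) = 0 = h^0(\mo_X(-C)(d))$. Writing $\mo_X(-C)(d) = \mo_X(dH_X - C)$, the class $dH_X - C = (d-m)H_X - \gamma$ satisfies $(dH_X - C)\cdot H_X = d^2 - md - 3 < 0$ once $m \gg d$, so the line bundle has negative degree against the ample class $H_X$ and hence $h^0 = 0$. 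For the $h^1$ vanishing I would pass to the Serre dual, $h^1(\mo_X(dH_X - C)) = h^1(\mo_X(K_X + (C - dH_X)))$; here $C - dH_X = \gamma + (m-d)H_X$ is ample for $m \gg d$ (a fixed class plus a large positive multiple of the ample class $H_X$), so Kodaira vanishing gives $h^1(\mo_X(K_X + (C - dH_X))) = 0$. Both hypotheses being verified, the first main theorem produces an irreducible, generically non-reduced component of $\mr{Hilb}_{P'}$ containing $C$ and parametrizing smooth space curves, where $P'$ is the Hilbert polynomial of $C$; this is the existence assertion of the theorem.

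It then remains to compute $P'$, that is, to read off the degree and arithmetic genus of $C$. The degree is immediate: $\deg(C) = [C]\cdot H_X = \gamma\cdot H_X + m d$, which equals $md + 3$ provided the class from \cite{D2} satisfies $\gamma \cdot H_X = 3$. For the genus I would use adjunction on the surface, $2\rho_a(C) - 2 = [C]^2 + K_X \cdot [C]$ with $K_X = (d-4)H_X$; expanding $[C]^2 = \gamma^2 + 2m(\gamma\cdot H_X) + m^2 d$ and inserting $\gamma\cdot H_X = 3$ together with $\gamma^2 = 14 - 5d$ gives, after simplification, $\rho_a(C) = 1 + (1/2)(md^2 + d(m^2 - 4m - 2) + 6m + 2)$, exactly as claimed.

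The step I expect to be the main obstacle is not any single vanishing --- those follow cleanly from the degree count and from Kodaira vanishing once $m \gg d$ --- but rather the interface with \cite{D2}: one must confirm that the \emph{same} class $\gamma$ witnessing the generic non-reducedness of $\ov{\NL(\gamma)}$ also carries the precise intersection profile $\gamma \cdot H_X = 3$, $\gamma^2 = 14 - 5d$ needed to reproduce the stated degree and genus, and that for this $\gamma$ the \emph{Furthermore} clause genuinely applies with the twist $m$ (smoothness of a general member of $|\gamma + m H_X|$ via Bertini, together with its semi-regularity and non-containment in surfaces of lower degree). Pinning down these numerical and genericity compatibilities, rather than the cohomology, is where the real care is required.
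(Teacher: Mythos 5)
Your proposal is correct and follows essentially the same route as the paper: the paper proves the theorem via Corollary \ref{mumsur04} and Lemma \ref{lc2}, i.e.\ it takes the class $\gamma=[2l_1+l_2]$ of Theorem \ref{m3} (= \cite[Theorem 6.17]{D2}), twists by $mH_X$ to get a smooth semi-regular $C'$ with the two vanishings (using Enriques--Severi--Zariski where you use Serre duality plus Kodaira, and a degree count for $h^0$, as you do), applies Theorem \ref{mumsur01}/\ref{mas1}, and computes degree and genus by adjunction. The intersection profile you flag as needing confirmation, $\gamma\cdot H_X=3$ and $\gamma^2=14-5d$, is exactly that of $2l_1+l_2$ with $l_1\cdot l_2=1$ and $l_i^2=2-d$, so the one conditional step in your argument is indeed satisfied by the class \cite{D2} provides.
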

See Theorem \ref{mumsur01} for a more general statement and end of \S \ref{mg} for a proof of the above result. The proof does not depend on 
the Picard number of the surface containing it.

Heuristically, the strategy is to first show that for any $d \ge 5$ there exists a smooth degree $d$ surface $X$ and an effective divisor $D$ on $X$ such that the 
Hodge locus (inside the family of smooth degree $d$ surfaces in $\p3$) corresponding to the cohomology class of $D$ is generically non-reduced. 
The Hodge loci corresponding to semi-regular curves come from relevant flag Hilbert schemes.
We then show that there exists a smooth, semi-regular (in the sense of Bloch) curve $D'$ in the linear system $|D+mH|$ for $m \gg 0$ and $H$ a hyperplane section on $X$.
Since the Hodge loci corresponding to $D$ and $D'$ coincide, this will give us a generically non-reduced flag Hilbert scheme.
Finally, the generically non-reduced flag Hilbert scheme gives rise to a generically non-reduced component of the Hilbert scheme of smooth curves in $\p3$.

We now discuss the details of the strategy. Fix any degree $d \ge 5$. Denote by $U_d$ the space parametrizing smooth degree $d$ hypersurfaces in $\p3$. 
By a \emph{curve} we will always mean an effective divisor contained in a smooth hypersurface in $\p3$. To study non-reduced components of Hodge locus and 
Hilbert schemes of curves, the obvious first step is to relate the tangent space of both objects. This is done in Theorem \ref{hf12}, Corollaries \ref{hf12c} and \ref{dim4}.
Next, let us consider a generically non-reduced component $L$ of the Hilbert scheme of curves in $\p3$. Suppose a general element $C$ of $L$ is contained 
in a smooth degree $d$ surface, say $X$. Consider the Hodge class $[C]$ of this curve. We prove that under certain hypothesis on $C$, the corresponding Hodge locus 
is generically non-reduced (see Proposition \ref{a71}). The two main hypotheses for this to be possible are: $h^1(\mo_X(-C)(d))=0$ and as $X$ deforms along the Hodge locus, the cohomology 
class of $[C]$ deforms to the cohomology class of a deformation of $C$ (in the Hilbert scheme). The latter property is satisfied if $C$ is semiregular (see Theorem \ref{b71}).
Observe that the first property implies that for every infinitesimal deformation of $C$ there exists an infinitesimal deformation of $X$ containing it.
If $C$ is smooth then $C$ is $d$-regular for $d \ge \deg(C)$. This implies, if $C$ is smooth and $d>\deg(C)+4$ then $h^1(\mo_X(-C)(d))=0=h^1(\mo_X(C))$. Hence, such
$C$ is semiregular. In particular, if $C$ is smooth then it is not difficult to conclude the Hodge locus corresponding to $[C]$ is generically non-reduced. This includes
the examples coming from the generically non-reduced components of Hilbert schemes of smooth curves due to Mumford and Kleppe, discussed before.

Therefore, in this article, we do not elaborate much on the Hodge locus corresponding to cohomology classes of \emph{smooth} curves. The example we are interested in is a 
classical result of Martin-Deschamps and Perrin (see Theorem \ref{a84}). Here the generically non-reduced component of the Hilbert scheme 
parametrizes generically non-reduced curves, contained in smooth hypersurfaces in $\p3$. In this case, checking the two hypotheses mentioned above is non-trivial.
To satisfy the first hypothesis, we prove that given a smooth degree $d$ surface $X$ containing a line $l$ and a coplanar curve $C$, the Castelnuovo-Mumford regularity of 
the effective divisor $D:=2l+C$ is at most $d$ (see Theorem \ref{cas12}). The proof of the second hypothesis requires some computation (see proof of Theorem \ref{a73}).
We finally prove that:
\begin{thm}[See Theorem \ref{a73}]\label{phe4}
 Let $d \ge 5$, $X$ be a smooth degree $d$ surface in $\p3$ containing two coplanar lines $l_1, l_2$. Denote by $D:=2l_1+l_2$. The Hodge locus (in $U_d$) corresponding
 to the cohomology class $[D]$ of $D$ is generically non-reduced.
\end{thm}

The advantage of using the Hilbert scheme of curves in Theorem \ref{a84} is that the general element of the corresponding Hodge locus is a surface with Picard number at least $3$.
We use this component of the Hodge locus to prove Theorem \ref{phe3}. 

As mentioned earlier, if $C$ is a semiregular curve in a smooth degree $d$ surface in $\p3$ then the Hodge locus corresponding to $[C]$ parametrizes smooth degree $d$ 
surfaces $Y$ in $\p3$ such that $C$ deforms to a curve in $Y$. Now, we simply relate the Hodge locus in Theorem \ref{phe4} with the Hodge locus of a semiregular curve.
For this purpose, we take a general curve $D \in L$ ($L$ as in Theorem \ref{a84}) and a smooth degree $d$ surface $X$ containing $D$. For 
$m \gg 0$ and $H$ the hyperplane section in $X$, one uses Bertini's theorem to show that a general element, say $D'$ of the linear system $|D+mH|$ is smooth. Using
the definition of semiregularity, it is easy to show that $D'$ is semiregular. Since the cohomology class of the hyperplane section remains a Hodge class along $U_d$,
the Hodge locus corresponding to $[D]$ coincides with that of $[D']$. 
Finally, using standard theory of relative Hilbert schemes, we can conclude that the corresponding family of semiregular curves, with $D'$ as a special fiber, is generically non-reduced.

\begin{note}
 From now on a \emph{surface} will always mean a smooth surface in $\p3$ and a \emph{curve} will mean an effective divisor in a smooth surface. Note a curve need not be reduced.
 Also, given a scheme $Y$, we denote by $Y_{\red}$ the associated reduced scheme.
\end{note}

{\bf{Acknowledgement:}} I would like to thank my supervisor R. Kloosterman for introducing me to the topic, reading the preliminary version of this article and several helpful discussions.
%I would also like to thank V. Srinivas for useful suggestions and to N. Tarasca for reading parts of the article.

\section{Preliminaries}

\subsection{Introduction to Hodge loci}

\begin{note}
Denote by $U_d \subseteq \mathbb{P}(H^0(\mathbb{P}^3, \op))$ 
the open subscheme parametrizing smooth projective hypersurfaces in $\mathbb{P}^3$ of degree $d$. Denote by $Q_d$ the Hilbert polynomial of degree $d$ surfaces in $\p3$. 
Given, a Hilbert polynomial $P$, denote by $H_P$ the corresponding Hilbert scheme and by $H_{P,Q_d}$ the corresponding flag Hilbert scheme.
Let $\mathcal{X} \xrightarrow{\pi} U_d$ be the corresponding universal family. For a given $F \in U_d$, denote by $X_F$ the surface $X_F:=\pi^{-1}(F)$. 
Let $X \in U_d$ and $U \subseteq U_d$ be a simply connected neighbourhood of $X$ in $U_d$ (under the analytic topology).
\end{note}

\begin{defi}
 As $U$ is simply connected, $\pi|_{\pi^{-1}(U)}$ induces a variation of Hodge structure $(\mathcal{H}, \nabla)$ on $U$ where $\mathcal{H}:=R^2\pi_*\mathbb{Z} \otimes 
\mathcal{O}_U$ and $\nabla$ is the Gauss-Manin connection. Note that $\mathcal{H}$ defines a local system on $U$ whose fiber over 
a point $F \in U$ is $H^2(X_F,\mathbb{Z})$. Consider a non-zero element $\gamma_0 \in H^2(X_F,\mathbb{Z}) \cap H^{1,1}(X_F,\mathbb{C})$
such that $\gamma_0  \not= c_1(\mathcal{O}_{X_F}(k))$ for $k \in \mathbb{Z}_{>0}$. 
This defines a section $\gamma \in (\mathcal{H} \otimes \mathbb{C})(U)$. Let $\overline{\gamma}$ be the image
of $\gamma$ in $\mathcal{H}/F^2(\mathcal{H} \otimes \mathbb{C})$. The \emph{Hodge loci}, denoted $\NL(\gamma)$ is then defined as
\[ \NL(\gamma):=\{G \in U | \overline{\gamma}_G=0\},\]
where $\overline{\gamma}_G$ denotes the value at $G$ of the section $\overline{\gamma}$. 
%The Gauss-Manin connection gives rise to a differential map, $\ov{\nabla}(\gamma):T_XU \to H^2(\mo_X)$, where $T_XU$ is the 
%tangent space to $U$ at the point corresponding to $X$.
%The \emph{tangent space} at $X$ to $\NL(\gamma)$ is defined to be $\ker(\ov{\nabla}(\gamma))$ (see \cite{v5} for further details).
\end{defi}

\begin{note}\label{nl1}
 Let $X \in U_d$ and $\mo_X(1)$, the very ample line bundle on $X$ determined by the closed immersion $X \hookrightarrow \p3$ arising (as in \cite[II.Ex.$2.14$(b)]{R1}) from the graded homomorphism
of graded rings $S \to S/(F_X)$, where $S=\Gamma_*(\mo_{\p3})$ and $F_X$ is the defining equations of $X$.
Denote by $H_X$ the very ample line bundle $\mo_X(1)$. 
\end{note}

     \begin{note}
     Let $X$ be a surface. Denote by $H^2(X,\mb{C})_{\prim}$, the primitive cohomology. There is a natural
      projection map from $H^{2}(X,\mb{C})$ to $H^{2}(X,\mb{C})_{\prim}$. For $\gamma \in H^{2}(X,\mb{C})$, denote by $\gamma_{\prim}$ the image of $\gamma$ under this morphism.
      Since the very ample line bundle $H_X$ remains of type $(1,1)$ in the family $\mc{X}$, we can conclude that
        $\gamma \in H^{1,1}(X)$ remains of type $(1,1)$ if and only if $\gamma_{\prim}$ remains of type $(1,1)$. In particular,
      $\NL(\gamma)=\NL(\gamma_{\prim})$.
     \end{note}

 \begin{lem}[{\cite[Lemma $5.13$]{v5}}]
 There is a natural analytic scheme structure on $\ov{\NL(\gamma)}$ (closure in $U_d$ under Zariski topology).
\end{lem}

\begin{note}
 Let $X_1$ be a projective scheme, $X_2 \subset X_1$, a closed subscheme. Denote by $\N_{X_2|X_1}$ the normal sheaf $\Hc_{X_1}(\I_{X_2/X_1},\mo_{X_2})$, where $\I_{X_2/X_1}$ is the ideal sheaf of $X_2$ in $X_1$.
\end{note}

\begin{defi} 
We now discuss the tangent space to the Hodge locus, $\NL(\gamma)$.
We know that the tangent space to $U$ at $X$, $T_XU$ is isomorphic to $H^0(\N_{X|\p3})$.
This is because $U$ is an open subscheme of the Hilbert scheme $H_{Q_d}$, the tangent space of which at the point $X$ is simply
$H^0(\N_{X|\p3})$. 
Given the variation of Hodge structure above, we have (by Griffith's transversality) the differential map:
\[\overline{\nabla}:H^{1,1}(X) \to \mathrm{Hom}(T_XU,H^2(X,\mathcal{O}_X))\]induced by the Gauss-Manin connection.
Given $\gamma \in H^{1,1}(X)$ this induces a morphism, denoted $\overline{\nabla}(\gamma)$ from $T_XU$ to $H^2(\mo_X)$.
\end{defi}

\begin{lem}[{\cite[Lemma $5.16$]{v5}}]\label{tan1}
The tangent space at $X$ to $\NL(\gamma)$ equals $\ker(\overline{\nabla}(\gamma))$.
\end{lem}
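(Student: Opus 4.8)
The plan is to exhibit $\NL(\gamma)$ as the zero scheme of a holomorphic section of a vector bundle on $U$ and to identify its Zariski tangent space at $X$ with the kernel of the intrinsic derivative of that section, which I will compute to be $\ov{\nabla}(\gamma)$. First I would fix the section. Because $U$ is simply connected, the integral class $\gamma_0 \in H^2(X,\mb{Z})$ extends uniquely to a flat section $\gamma$ of the local system $R^2\pi_*\mb{Z}$, so that $\nabla\gamma=0$ in $\mc{H}$. Write $\mc{Q}:=(\mc{H}\otimes\mb{C})/F^2(\mc{H}\otimes\mb{C})$ for the quotient bundle, whose fibre over $G$ is $H^2(X_G,\mb{C})/F^2\cong H^2(X_G,\mo_{X_G})$, and let $p\colon \mc{H}\otimes\mb{C}\to\mc{Q}$ be the projection, so that $\ov{\gamma}=p(\gamma)$. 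By construction $\NL(\gamma)=\ov{\gamma}^{-1}(0)$ with the analytic scheme structure of \cite[Lemma $5.13$]{v5}; and since $\gamma_X=\gamma_0$ is of type $(1,1)$ it lies in $F^2_X$, whence $\ov{\gamma}_X=0$ and $X\in\NL(\gamma)$.

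Next I would invoke the standard fact that for a holomorphic section $s$ of a holomorphic bundle $E$ on a complex manifold $M$ with $s(x)=0$, the Zariski tangent space of the zero scheme $s^{-1}(0)$ at $x$ equals $\ker(\ov{d}_x s)$, where the intrinsic derivative $\ov{d}_x s\colon T_xM\to E_x$, $v\mapsto (D_v s)(x)$, is well defined (independent of the auxiliary connection $D$) precisely because $s(x)=0$. Applying this to $s=\ov{\gamma}$, it remains to compute $\ov{d}_X\ov{\gamma}$. I would choose a local holomorphic section $\sigma$ of the subbundle $F^2$ with $\sigma(X)=\gamma_0$ (possible since $\gamma_0\in F^2_X$ and $F^2$ is a holomorphic subbundle), and set $\eta:=\gamma-\sigma$, a section of $\mc{H}\otimes\mb{C}$ vanishing at $X$ with $p(\eta)=\ov{\gamma}$ because $\sigma$ is a section of $\ker p=F^2$. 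Since $\eta(X)=0$ the derivative is intrinsic and, using $\nabla\gamma=0$,
\[ \ov{d}_X\ov{\gamma}(v)=p_X\big((\nabla_v\eta)(X)\big)=-\,p_X\big((\nabla_v\sigma)(X)\big), \]
so that $\ov{d}_X\ov{\gamma}(v)$ is the class of $-\nabla_v\sigma$ modulo $F^2$. By the definition of the graded Gauss--Manin map this is $-\ov{\nabla}(\gamma)(v)$, and since the sign is immaterial for kernels we conclude $T_X\NL(\gamma)=\ker\ov{d}_X\ov{\gamma}=\ker\ov{\nabla}(\gamma)$.

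The step I expect to require the most care is the identification $\ov{d}_X\ov{\gamma}=\pm\,\ov{\nabla}(\gamma)$. The delicate point is that $\nabla$ does \emph{not} descend to $\mc{Q}$, since by Griffiths transversality it only satisfies $\nabla F^p\subseteq F^{p-1}\otimes\Omega^1_U$ and hence fails to preserve $F^2$; one therefore cannot differentiate $\ov{\gamma}$ naively, and the entire computation must be anchored on the vanishing $\ov{\gamma}_X=0$, which is exactly what renders the derivative intrinsic and lets flatness convert it into $-\nabla_v\sigma\bmod F^2$. The same transversality is what makes $\ov{\nabla}(\gamma)$ well defined as a map out of $H^{1,1}(X)$: changing the lift $\sigma$ of $\gamma_0$ by a section of the deeper step $H^{2,0}$ alters $\nabla_v\sigma$ only by a section of $F^2$, which dies modulo $F^2$. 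Keeping track of these compatibilities between the flat section, the Hodge subbundle, and the connection, rather than any hard estimate, is the real content of the argument.
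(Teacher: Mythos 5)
The paper gives no proof of this lemma at all --- it is quoted directly from Voisin (Lemma $5.16$ of the cited reference \cite{v5}) --- and your argument is exactly the standard one from that source: realise $\NL(\gamma)$ as the zero scheme of the section $\ov{\gamma}$ of the quotient of $\mc{H}\otimes\mb{C}$ by the Hodge subbundle, note that the derivative of a section at a point where it vanishes is intrinsic, and use flatness of $\gamma$ together with Griffiths transversality to identify that intrinsic derivative with $\ov{\nabla}(\gamma)$. The one blemish, inherited from the paper's own definition of $\NL(\gamma)$, is the index of the Hodge filtration: for a weight-two structure the relevant quotient is $\mc{H}/F^1(\mc{H}\otimes\mb{C})$, whose fibre is $H^{0,2}\cong H^2(X,\mo_X)$ and whose subbundle $F^1$ genuinely contains a $(1,1)$-class, whereas with $F^2=\mc{H}^{2,0}$ as literally written your assertions that the fibre of the quotient is $H^2(X,\mo_X)$ and that $\gamma_0\in F^2_X$ would both fail.
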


\begin{defi}\label{gr0}
 The boundary map $\rho$, from $H^0(\N_{X|\p3})$ to $H^1(\T_X)$ arising from the long exact sequence associated to the 
 short exact sequence:
 \begin{equation}\label{ex9}
 0 \to \T_X \to \T_{\p3}|_X \to \N_{X|\p3} \to 0
 \end{equation}
  is called the \emph{Kodaira-Spencer} map. The morphism $\overline{\nabla}(\gamma)$ is related to the Kodaira-Spencer map as follows:
\end{defi}

\begin{thm}\label{a3}
The differential map $\overline{\nabla}(\gamma)$ conincides with the following:
\[T_XU\cong H^0(\N_{X|\p3}) \xrightarrow{\rho} H^1(\T_X) \xrightarrow{\bigcup \gamma} H^2(\mo_X)\]
and under the identification $\N_{X|\p3} \cong \mo_X(d)$, $\ker(\rho) \cong J_d(F)$, where $J_d(F)$ denotes the degree $d$ graded piece of the Jacobian ideal of $X$.
\end{thm}

\begin{proof}
 See \cite[Theorem $10.21$]{v4} and \cite[Lemma $6.15$]{v5} for a proof. 
\end{proof}

\subsection{Introduction to flag Hilbert schemes}

We briefly recall the basic definition of flag Hilbert schemes and its tangent space in the relevant case. See \cite[\S $4.5$]{S1}
for further details.

\begin{defi}
Given an $m$-tuple of numerical polynomials $\mathcal{P}(t)=(P_1(t),P_2(t),...,P_m(t))$, we define the  contravariant functor, called the \emph{Hilbert flag functor} 
relative to $\mathcal{P}(t)$,
\begin{eqnarray*}
FH_{\mathcal{P}(t)}:(\mbox{schemes}) &\to& \mbox{sets}\\
S &\mapsto& \left\{\begin{tabular}{l|l}
$(X_1,X_2,...,X_m)$& $X_1 \subset X_2 \subset ... \subset X_m \subset \mathbb{P}^3_S$\\
& $X_i \mbox{ are } S-\mbox{flat with Hilbert polynomial } P_i(t)$
\end{tabular}\right\}
\end{eqnarray*}
We call such an $m$-\emph{tuple a flag relative to} $\mathcal{P}(t)$. The functor is representable by a projective scheme $H_{\mc{P}(t)}$, called \emph{flag Hilbert scheme}.
\end{defi}

\begin{defi}\label{hf1}
In the case $m=2$, we have the following definition of the tangent space at a pair $(X_1,X_2)$ to the flag Hilbert scheme $H_{P_1,P_2}$:
\begin{equation}\label{dia2}
\begin{diagram}
&T_{(X_1,X_2)}H_{P_1,P_2} &\rTo &H^0(\N_{X_2|\mathbb{P}^3}) & \\
&\dTo & \square &\dTo & \\
&H^0(\N_{X_1|\mathbb{P}^3}) &\rTo &H^0(\N_{X_2|\mathbb{P}^3} \otimes \mathcal{O}_{X_1})
\end{diagram}
\end{equation} 
\end{defi}

\section{Semi-regularity map and Hodge locus}

\begin{para}
In this section, we consider the case when $\gamma$ is the cohomology class of a curve $C$ in a smooth degree $d$ surface $X$ in $\p3$ (recall $C$ need not be reduced). 
We see that the differential map $\ov{\nabla}(\gamma)$ factors through the semi-regularity map, $H^1(\N_{C|X}) \to H^2(\mo_X)$ (Theorem \ref{hf12}).
Using this description, we compute $\ker \ov{\nabla}(\gamma)$, which is the tangent space to the Hodge locus, $\NL(\gamma)$ at the point corresponding to $X$ (see Corollary \ref{hf12c}).
 \end{para}

\begin{defi}\label{mumsur02}
We start with the definition of a semi-regular curve.
 Let $X$ be a surface and $C \subset X$, a curve in $X$. Since $X$ is smooth, $C$ is local complete intersection in $X$.
Denote by $i$ the closed immersion of $C$ into $X$. This gives rise to the short exact sequence:
 \begin{equation}\label{sh1}
 0 \to \mo_X \to \mo_X(C) \to i_* \N_{C|X} \to 0.
                 \end{equation}                       
 The \emph{semi-regularity map} $\pi_C$ is the boundary map from $H^1(\N_{C|X})$ to $H^2(\mo_X)$.
 We say that $C$ is \emph{semi-regular} if $\pi_C$ is injective. 
 %Recall, \cite[III Ex. $5.5$(a)]{R1} states that:
\end{defi}

%\begin{lem}\label{f1}
%Let $V \subset \mb{P}^n$ be a closed complete intersection subscheme for $n \ge 1$ and $\dim V \ge 1$. Then, for all $k \in \mb{Z}$ the natural map from $H^0(\mo_{\mb{P}^n}(k))$ to
%$H^0(\mo_V(k))$ is surjective. 
%\end{lem}

%The proof of the result is left to the reader.

\begin{note}\label{hf23}
 Let $X$ be a smooth surface and $C$ a local complete intersection curve in $X$. There is a natural morphism $j_1:H^1(\T_X \otimes \mo_C) \to H^1(\N_{C|X})$ 
 induced by taking dual of the natural differential map, $d_X:\mo_X(-C)  \otimes \mo_C \to \Omega^1_X  \otimes \mo_C$.
 Let $u_*:H^1(\T_X) \to H^1(\N_{C|X})$ be the composition of the restriction morphism $j_3:H^1(\T_X) \to H^1(\T_X \otimes \mo_C)$
 with the morphism $j_1$ above.
\end{note}

\begin{thm}[{\cite[Theorem $4.5, 5.5$]{fle}}]\label{hf4}
Let $C, X$ be as in Notation \ref{hf23} and $[C]$ denote the cohomology class of $C$.
 The morphism $u_*$ defined above satisfies the following commutative diagram:
 \[\begin{diagram}
    H^1(\T_X)&\rTo^{u_*}&H^1(\N_{C|X})\\
    \dTo^{\bigcup [C]}&\ldTo^{\pi_C}\\
    H^2(\mo_X)
   \end{diagram}\]
   where $\bigcup [C]$ is the morphism which takes $\xi$ to $\xi \bigcup [C]$, the cup-product of $\xi$ and $[C]$.
\end{thm}

\begin{rem}
 One can see that $u_*$ is the obstruction map in the sense, for $\xi \in H^1(\T_X)$, $u_*(\xi)=0$ if and only if $C$ lifts to a local complete intersection in the infinitesimal deformation of $X$ corresponding to $\xi$ (see \cite[Proposition $2.6$]{b1}).
 We will now see a diagram (in Theorem \ref{hf12}) which illustrates this fact more clearly.
\end{rem}

\begin{para}
Recall, the following short exact sequence of normal sheaves:
 \begin{equation}\label{sh2}
  0 \to \N_{C|X} \to \N_{C|\p3} \to \N_{X|\p3}  \otimes \mo_C \to 0
 \end{equation}
 which arises from the short exact sequence:
 \begin{equation}\label{ex0c}
 0 \to \I_X \to \I_C \xrightarrow{j^\#} j_*\mo_X(-C) \to 0.
 \end{equation}
after applying $\Hc_{\p3}(-,j_{0_*} \mo_C)$, where $j_0$ (resp. $j$) is the closed immersion of $C$ (resp. $X$) into $\p3$.
\end{para}

The following lemma will be used in the proof of Theorem \ref{hf12} below. 

\begin{lem}\label{de1}
Given $j:C \hookrightarrow X$ the closed immersion, 
 the following diagrams are commutative and the horizontal rows are exact:
 \[\begin{diagram}
    0 &\rTo &\I_X  \otimes \mo_C &\rTo &\I_C  \otimes \mo_C &\rTo^u &\mo_X(-C) \otimes \mo_C &\rTo&0\\
    & &\dTo^{\cong}&\circlearrowright&\dInto^{d_{\p3}}&\circlearrowright&\dInto^{d_X}& \\
    0 &\rTo &\I_X  \otimes \mo_C &\rTo^{d_{\p3}} &\Omega^1_{{\p3}}  \otimes \mo_C &\rTo^v &\Omega^1_X  \otimes \mo_C &\rTo &0
   \end{diagram}\]
   where $d_{\p3}$ (resp. $d_X$) are the K\"{a}hler differential operators, $u$ is induced by $j^\#$ (as in (\ref{ex0c})) and $v$ 
   induced by the natural morphism $\mo_{\p3} \to j_*\mo_X$.
 \end{lem}

 \begin{proof}
The exactness of the lower horizontal sequence is explained in \cite[Theorem $25.2$]{mat}. 
We now show the exactness of the upper horizontal sequence. The sequence is obtained via pulling back (\ref{ex0c}) 
by the closed immersion of $C$ into $\p3$.
Since tensor product is right exact, it remains to prove that the morphim from $\I_X  \otimes \mo_C$ to $\I_C  \otimes \mo_C$
is injective. It suffices to prove this statement on the stalk  for all $x \in C$, closed point. Note that,
$\I_{X,x}  \otimes \mo_{C,x}$ (resp. $\I_{C,x}  \otimes \mo_{C,x}$) is isomorphic to $\I_{X,x}/\I_{X,x}\I_{C,x}$ (resp.
$\I_{C,x}/\I_{C,x}^2$). So, we need to show that $\I_{X,x} \cap \I_{C,x}^2$
is contained in $\I_{X,x}.\I_{C,x}$.

Since $C$ is a local complete intersection curve, $\I_{C,x}$ is generated by a regular sequence, say $(f_x, g_x)$ 
 where $g_x$ generates the ideal $\I_{X,x}$. So, $\I_{X,x}.\I_{C,x}=(g_x^2,f_xg_x), \I_{C,x}^2=(f_x^2,f_xg_x,g_x^2)$. Note that any element in $(g_x) \cap (f_x^2,f_xg_x,g_x^2)$ is divisible by $f_x^2$
modulo the ideal $(g_x^2,f_xg_x)$. Therefore it is divisible by $g_xf_x^2$, hence is an element in $\I_{X,x}.\I_{C,x}$. It directly follows that the natural morphism from 
$\I_{X,x}/\I_{X,x}.\I_{C,x}$ to $\I_{C,x}/\I_{C,x}^2$ is injective.

%and another equation, say $f_x$ such that 
% $\I_{X,x} \cap (f_x)=\I_{X,x}(f_x)$. In particular, take $f_x \in \mo_{\p3,x}$ such that its image 
% $\overline{f}_x \in \mo_{X,x}$ is the generator of $\mo_X(-C)_x$. 
% If there  exists $g \not\in \I_{X,x}$ such that $g.f_x \in \I_{X,x}\cap (f_x)$ then $f_x \mod \I_{X,x}$ is a zero divisor 
% in $\mo_X(-C)_x$ which is a contradiction to local freeness. This proves that $\I_{X,x} \cap \I_{C,x}^2$
%is contained in $\I_{X,x}.\I_{C,x}$. 

The commutativity of the diagrams follows directly from the description of the relevant morphisms.
\end{proof}

%\begin{para}\label{hf2}
% Since $X$ is smooth, we have the following short exact sequence of locally free $\mo_X$-modules:
% \[0 \to \I_X|_X \otimes \mo_X \to \Omega^1_{\mo_{\p3}}|_X \otimes \mo_X \xrightarrow{f} \Omega^1_X \to 0\]
% where the morphism $f$ is induced by the surjective morphism $\mo_{\p3}$ to $j_*\mo_X$ (See \cite[II. \S $8$]{R1}).
% Applying the functor $i^*$ to this short exact sequence, gives us the short exact sequence
% \begin{equation}\label{sh3}
%  0 \to \I_X  \otimes \mo_C \to \Omega^1_{\mo_{\p3}}  \otimes \mo_C \xrightarrow{f'} \Omega^1  \otimes \mo_C \to 0.
% \end{equation}
% Note that in this short exact sequence the tensor products are over $i^{-1}\mo_X$.
%\end{para}

\begin{thm}\label{hf12}
Let $X$ be a smooth surface, $C \subset X$ and $\gamma=[C] \in H^{1,1}(X,\mb{Z})$.
 We then have the following commutative diagram
 \[\begin{diagram}
  & &  & &T_{(C,X)}H_{P,Q_d}&\rTo&H^0(X,\N_{X|\p3})&\rTo^{\ov{\nabla}(\gamma)}&H^2(X,\mo_X)\\
  & &  & &\dTo&\square&\dTo^{\rho_C}&\circlearrowright&\uTo^{\pi_C}& \\
    0&\rTo&H^0(C,\N_{C|X})&\rTo^{\phi_C}&H^0(C,\N_{C|\p3})&\rTo^{\beta_C}&H^0(C,\N_{X|\p3} \otimes \mo_C)&\rTo^{\delta_C}&H^1(C,\N_{C|X}) 
       \end{diagram}\]
where the horizontal exact sequence comes from (\ref{sh2}), $\pi_C$ is the semi-regularity map and $\rho_C$ is the natural
pull-back morphism.
\end{thm}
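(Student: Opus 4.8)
The diagram packages three independent assertions: exactness of the bottom row, commutativity of the left-hand square, and commutativity of the right-hand triangle, i.e.\ the identity $\ov{\nabla}(\gamma)=\pi_C\circ\delta_C\circ\rho_C$. The plan is to treat them in this order, as they are of increasing depth. The bottom row is immediate: it is the initial segment of the long exact cohomology sequence attached to the normal bundle sequence $0\to\N_{C|X}\to\N_{C|\p3}\to\N_{X|\p3}\otimes\mo_C\to0$ of the flag $C\subset X\subset\p3$, with $\phi_C$ and $\beta_C$ induced on global sections and $\delta_C$ the first connecting homomorphism; left-exactness of global sections gives the injectivity of $\phi_C$ and exactness at the middle terms.

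For the square I would appeal to the deformation theory of the flag Hilbert scheme (cf.\ \cite{kle2}). A first-order deformation of the flag $(C\subset X)$ in $\p3$ is a pair consisting of a first-order deformation of $C$, i.e.\ an element $u\in T_CH_P\cong H^0(C,\N_{C|\p3})$, and a first-order deformation of $X$, i.e.\ an element $v\in T_XH_{Q_d}\cong H^0(X,\N_{X|\p3})$, such that $C$ continues to lie in the deformed surface; this compatibility is exactly the equation $\beta_C(u)=\rho_C(v)$ in $H^0(C,\N_{X|\p3}\otimes\mo_C)$. Thus $T_{(C,X)}H_{P,Q_d}$ is the fibre product of $\beta_C$ and $\rho_C$, the left vertical and top horizontal maps of the diagram are the two projections $(u,v)\mapsto u$ and $(u,v)\mapsto v$, and the square commutes by this very equation, once one checks that the standard identifications of the Hilbert-scheme tangent spaces are compatible with restriction to $C$.

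The heart of the statement is the triangle. I would first rewrite $\ov{\nabla}(\gamma)$ through the infinitesimal variation of Hodge structure (see \cite[\S5,6]{v5}): composing with the Kodaira--Spencer map $\kappa\colon H^0(\N_{X|\p3})\to H^1(\mc{T}_X)$, which is the connecting map of $0\to\mc{T}_X\to\mc{T}_{\p3}|_X\to\N_{X|\p3}\to0$, the map $\ov{\nabla}(\gamma)$ sends $v$ to the image of $\kappa(v)\cup\gamma$ under the contraction $H^2(\mc{T}_X\otimes\Omega^1_X)\to H^2(\mo_X)$, where $\gamma=[C]=c_1(\mo_X(C))\in H^1(\Omega^1_X)$. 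On the other side, I would realise both $c_1(\mo_X(C))$ and the semi-regularity map $\pi_C$ through the single divisor sequence \eqref{sh1}: the class $c_1(\mo_X(C))$ is its Atiyah/extension class and $\pi_C$ is precisely its boundary map $H^1(\N_{C|X})\to H^2(\mo_X)$. The identity then reduces to a cocycle-level comparison showing that cupping $\kappa(v)$ with this extension class and contracting coincides with first restricting $v$ to $C$ by $\rho_C$, taking the boundary $\delta_C$ of the flag normal sequence, and closing up with $\pi_C$; the bridge between the two sides is the naturality of connecting homomorphisms, which lets one commute $\kappa(v)\cup(-)$ past the relevant boundary maps.

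The main obstacle is exactly this last matching, which identifies the transcendentally defined Gauss--Manin derivative with the purely algebraic composite $\pi_C\circ\delta_C\circ\rho_C$ and is in substance Bloch's semi-regularity theorem \cite{b1}. I would therefore reduce to, or adapt, the explicit \v{C}ech computation there (and in Voisin's treatment), taking care with signs and with the identification $\N_{X|\p3}\cong\mo_X(d)$; getting the two connecting-map conventions to agree on the nose is the delicate book-keeping that makes the triangle, rather than the square or the row, the genuine content of the theorem.
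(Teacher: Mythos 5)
This theorem is not proved in the paper at all: it is imported verbatim from \cite[Theorem 4.8]{D2}, so there is no in-paper argument to compare yours against. That said, your decomposition into (i) exactness of the bottom row via the long exact sequence of the flag normal sequence, (ii) commutativity of the left square via the fibre-product description of $T_{(C,X)}H_{P,Q_d}$ inside $H^0(\N_{C|\p3})\times H^0(\N_{X|\p3})$, and (iii) the identity $\ov{\nabla}(\gamma)=\pi_C\circ\delta_C\circ\rho_C$ is exactly the canonical route, and your identification of (iii) as the real content — Griffiths' formula $\ov{\nabla}(\gamma)(v)=\kappa(v)\cdot\gamma$ on one side, Bloch's semi-regularity computation on the other — is accurate. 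The only reservation is that you leave precisely this step as an unexecuted \v{C}ech comparison; that is where all the work (and all the sign conventions) lives, so as written your text is a correct and well-organized reduction to \cite{b1} rather than a self-contained proof, which is a reasonable level of detail given that the paper itself only cites the result.
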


\begin{proof}
The only thing we need to prove is that $\ov{\nabla}(\gamma)$ is the same as $\pi_C \circ \delta_C \circ \rho_C$.
Using Theorems \ref{a3} and \ref{hf4}, we have that $\ov{\nabla}(\gamma)$ factors as:
\[H^0(\N_{X|\p3}) \xrightarrow{\rho} H^1(\T_X) \xrightarrow{u_*} H^1(\N_{C|X}) \xrightarrow{\pi_C} H^1(\mo_X).\]
Hence it suffices to show that $u_* \circ \rho$ is the same as $\delta_C \circ \rho_C$.

Recall,  under the notations as in \ref{hf23} we can factor $u_*$ as $j_1 \circ j_3$. Hence, it suffices to construct
a morphism $j_2:H^0(\N_{X|\p3} \otimes \mo_C) \to H^1(\T_X \otimes \mo_C)$ such that following two diagrams commute:
\begin{equation}\label{dc1}
\begin{diagram}
   H^0(\N_{X|\p3})&\rTo^{\rho}&H^1(\T_X)&\rTo^{j_3}&H^1(\T_X \otimes \mo_C)&\rTo^{j_1}&H^1(\N_{C|X})\\
   &\rdTo^{\rho_C} & &\ruDashto^{j_2}& &\ruTo(4,2)^{\delta_C}\\
   &  &H^0(\N_{X|\p3} \otimes \mo_C)
  \end{diagram}
  \end{equation}

We define $j_2$ in the following way: Take the following commutative diagram of short exact sequences:
\begin{equation}\label{com6}
\begin{diagram}
0 &\rTo &\T_X &\rTo &\T_{\p3} \otimes \mo_X &\rTo &\N_{X|\p3} &\rTo &0\\
& &\dTo&\circlearrowright&\dTo&\circlearrowright&\dTo\\
0 &\rTo &\T_X \otimes \mo_C &\rTo &\T_{\p3} \otimes \mo_C &\rTo &\N_{X|\p3} \otimes \mo_C&\rTo &0
\end{diagram}
\end{equation}
Then $j_2$ arises from the associated long exact sequence:
 \[\begin{diagram}
   H^0(\N_{X|\p3})& \rTo^{\rho} &H^1(X,\T_X)\\
   \dTo^{\rho_C}&\circlearrowright &\dTo^{j_3}\\
   H^0(\N_{X|\p3}  \otimes \mo_C)& \rTo^{j_2} &H^1(\T_X  \otimes \mo_C)
  \end{diagram}\]
  where the maps (other than $j_2$) is the same as defined above. This gives the commutativity of the first
  square in the diagram (\ref{dc1}).
  
  We now prove the commutativity of the second diagram.
Since the terms in the short exact sequences in Lemma \ref{de1} are locally free $\mo_C$-modules, we get the 
dual diagram of short exact sequence by applying $\Hc_C(-,\mo_C)$ to it. This gives us the following: 
\[\begin{diagram}
0 &\rTo &\N_{C|X} &\rTo &\N_{C|\p3} &\rTo &\N_{X|\p3} \otimes \mo_C &\rTo &0\\
& &\uTo&\circlearrowright&\uTo&\circlearrowright&\uTo^{\cong}\\
0 &\rTo &\T_X \otimes \mo_C &\rTo &\T_{\p3} \otimes \mo_C &\rTo &\N_{X|\p3} \otimes \mo_C&\rTo &0
\end{diagram}\]
where the bottom short exact sequence is the same as in (\ref{com6}).
Taking the associated long exact sequence,
we get 
 \[\begin{diagram}
    & &H^0(\N_{X|\p3} \otimes \mo_C)\\
    & \ldTo^{j_2} & \dTo^{\circlearrowright\, \, \,}_{\delta_C}\\
    H^1(\T_X \otimes \mo_C)&\rTo^{j_1}&H^1(\N_{C|X})
   \end{diagram}\]
The theorem follows.
\end{proof}

\begin{cor}\label{hf12c}
 Let $X$ be a smooth degree $d$ surface in $\p3$, $C \subset X$ be a curve with Hilbert polynomial, say $P$ and $[C]$ its corresponding cohomology class. 
Then, the tangent space, \[T_X(\NL([C])) \supseteq \rho_C^{-1}(\Ima \beta_C)= \pr_2T_{(C,X)}H_{P,Q_d}.\]
Furthermore, if $C$ is semi-regular then we have equality
$T_X(\NL([C]))=\pr_2T_{(C,X)}H_{P,Q_d}$.
\end{cor}

\begin{proof}
 The first statement follows directly from the diagram in Theorem \ref{hf12}.
Now, if $C$ is semi-regular then $\pi_C$ (as in the diagram) is injective. Hence, $\ker \overline{\nabla}([C])=\ker(\delta_C \circ \rho_C)=\rho_C^{-1}(\Ima \beta_C)$. The corollary then follows. 
\end{proof}

  \begin{cor}\label{dim4}
  Notations as in Theorem \ref{hf12}.
   The kernel of $\rho_C$ is isomorphic to $H^0(\mo_X(-C)(d))$ and $\rho_C$ is surjective if and only if $H^1(\mo_X(-C)(d))=0$.
   Moreover, if $H^1(\mo_X(-C)(d))=0$ then $\pr_1(T_{(C,X)}H_{P,Q_d})=H^0(\N_{C|\p3})$.
  \end{cor}

  \begin{proof}
   Since $\N_{X|\p3} \cong \mo_X(d)$ the first statement follows from the short exact sequence,
   \begin{equation}
 0 \to \mo_X(-C)(d) \to \mo_X(d) \to i_*\mo_C(d) \to 0
\end{equation}
and the fact that $H^1(\mo_X(d))=0$,
where $i$ is the closed immersion of $C$ into $X$. 

The last statement follows directly from the definition of $T_{(C,X)}H_{P,Q_d}$ given in \ref{hf1}.
     \end{proof}

 % \begin{para}\label{lg}
 %Choose a Hilbert polynomial $P$ of a curve in $\p3$. Fix an irreducible component $L_0$ of $\pr_1H_{P,Q_d}$. For a general curve $C$ in $L_0$, fixing a degree $d$ surface $X$ containing 
 %$C$ fixes an irreducible component $L_1$ of $H_{P,Q_d}$ containing the pair $(C,X)$ such that the image under the natural projection map $\pr_1$ from $L_1$ to $H_P$ is 
 %equal to $L_0$. Under this notations, let $\gamma=[C] \in H^{1,1}(X,\mb{Z})$ and denote by $L_\gamma:=L_0$ and $H_\gamma:=L_1$.
 %\end{para}

\section{Non-reduced components of the Hodge locus}

In this section we give an explicit example of generically non-reduced Hodge locus (see Theorem \ref{a73}).

\subsection{General criterion for generically non-reduced components of the Hodge locus}

\begin{para}
 The aim of this subsection is to give a criterion for a Hodge locus to be generically non-reduced (see Proposition \ref{a71}). 
 This will be used in the proof of Theorem \ref{a73} to produce a concrete example of a generically non-reduced Hodge locus.
 \end{para}

Recall the following result,
\begin{lem}\label{a4e}
Let $d \ge 5$ and $C$ be an effective divisor on a smooth degree $d$ surface $X$ of the form $\sum_i a_iC_i$ where $C_i$ are integral curves with $\deg(C)+2 \le d$.
Then, $h^0(\N_{C|X})=0.$ In particular, $\dim |C|=0$ where $|C|$ is the linear system associated to $C$.
\end{lem}

\begin{proof}
See \cite[Lemma $3.6$]{D3}.
\end{proof}

\begin{lem}\label{dim3}
 Let $X$ be a smooth surface and $C$ a local complete intersection in $X$. Then, $h^0(\mo_X(-C)(d))=h^0(\I_C(d))-1$ and $h^0(\N_{C|X})=h^0(\mo_X(C))-1$.
\end{lem}

\begin{proof}
 The first equality follows from the short exact sequence,
 \[0 \to \I_X(d) \to \I_C(d) \to i_*\mo_X(-C)(d) \to 0\]
 and the fact that $\I_X \cong \mo_{\p3}$, where $i:X \to \p3$ is the natural closed immersion.
 
 The second equality follows from the short exact sequence (\ref{sh1}) after using the facts $h^0(\mo_X)=1$ and $h^1(\mo_X)=0$.
\end{proof}

We can then compute the dimension of $\ov{\NL(\gamma)}$ as follows:
\begin{lem}\label{dim}
Let $P$ be the Hilbert polynomial of a curve in $\p3$ and $L$ an irreducible component of the Hilbert scheme $H_P$. 
 Suppose there exists an integer $d \ge 5$ such that a general element of $L$ is contained in a smooth degree $d$ surface. Take a general pair $(C,X)$ where $C$ is an element in $L$ and $X$ is a smooth
 degree $d$ surface containing $C$. Denote by $\gamma=[C]$, the cohomology class of $C$. If there exists an irreducible component $H_\gamma$ in $(H_{{P,Q_d})_{\red}}$ such that 
 $\pr_2(H_\gamma)_{\red}=\ov{\NL(\gamma)}_{\red}$
The dimension of $\ov{\NL(\gamma)}$, \[\dim \ov{\NL(\gamma)}=\dim I_d(C)+\dim L-h^0(\mathcal{O}_X(C)).\]
\end{lem}

\begin{proof}
This follows from the fiber dimension theorem (see \cite[II Ex. $3.22$]{R1}) which states that for a morphism of finite type between two integral schemes, $f:X \to Y$,
$\dim X=\dim f^{-1}(y)+\dim Y$ for a general point $y \in Y$. 
We then have the following maps,
\[\pr_1:H_\gamma \twoheadrightarrow L\, \, \, \pr_2:H_\gamma \twoheadrightarrow \ov{\NL(\gamma)}\]
where the generic fiber of $\pr_1$ is $\mb{P}(I_d(C))$ and that of $\pr_2$ is $\mathbb{P}(H^0(\mathcal{O}_X(C)))$.
We then conclude, \[\dim {H}_\gamma=\dim L+\dim I_d(C)-1=\dim \ov{\NL(\gamma)}+h^0(\mo_X(C))-1.\]
Therefore,  $\dim \ov{\NL(\gamma)}=\dim L+\dim I_d(C)-h^0(\mathcal{O}_X(C))$.
\end{proof}

\begin{prop}\label{a71}
 Hypothesis as in Lemma \ref{dim}. If $L$ is a generically non-reduced component of the Hilbert scheme $H_P$ and $h^1(\mo_X(-C)(d))=0$ then  $\NL(\gamma)$ is generically non-reduced.
\end{prop}

\begin{proof}
As $h^1(\mo_X(-C)(d))=0$, Corollary \ref{dim4} implies $\rho_C$ is surjective. Then the diagram in \ref{hf1} tells us $\pr_1T_{(C,X)}H_{P,Q_d}=T_CL$.
By Corollary \ref{hf12c}, we can then conclude \[\dim T_X\NL(\gamma) \ge \dim \pr_2T_{(C,X)}H_{P,Q_d}=\dim T_{C,X}H_{P,Q_d}-\dim \ker \beta_C=\]
\[=\dim \pr_1T_{(C,X)}H_{P,Q_d}+\dim \ker \rho_C-\dim \ker \beta_C=\dim T_CL+\dim \ker \rho_C-\dim \ker \beta_C.\]
Using the normal short exact sequence, $\ker \beta_C \cong H^0(\N_{C|X})$. By Corollary \ref{dim4},  $\ker \rho_C \cong H^0(\mo_X(-C)(d))$.
Therefore, Lemma \ref{dim3} implies
\begin{eqnarray*}
 \dim T_X\NL(\gamma)&\ge& \dim T_{C}L+h^0(\mo_X(-C)(d))-h^0(\N_{C|X})\\
 &=&\dim T_CL+\dim I_d(C)-h^0(\mo_X(C)).
\end{eqnarray*}
Finally, by Lemma \ref{dim}, \[\dim T_X\NL(\gamma) - \dim \NL(\gamma) \ge \dim T_CL-\dim L.\]
So, $\NL(\gamma)$ is generically non-reduced if $L$ is generically non-reduced. This proves the proposition.
\end{proof}

\subsection{An Example due to Martin-Deschamps and Perrin}

\begin{para}
 In this subsection, we study some specific examples of effective divisor on smooth degree $d$ surfaces. We show that the Castelnuovo-Mumford regularity for the 
ideal sheaves of these divisors is at most $d$ (Theorem \ref{cas12}). This will imply that the curves satisfy the additional condition in Proposition \ref{a71} above.
\end{para}
 
\begin{note}\label{a7}
 Let $a, d$ be positive integers, $d \ge 5$ and $a>0$. Let $X$ be a smooth projective surface in $\p3$ of degree $d$ containing a line $l$ and a smooth coplanar curve $C_1$ of degree $a$.
 Let $C$ be a divisor of the form $2l+C_1$ in $X$. Denote by $P$ the Hilbert polynomial of $C$.
\end{note}

\begin{thm}[Martin-Deschamps and Perrin \cite{mar1}]\label{a84}
There exists an irreducible component, say $L$ of $H_P$ such that a general curve $D \in L$ is a divisor in a smooth degree $d$ surface in $\p3$ of the form $2l'+C_1'$ where $l', C_1'$ are coplanar curves with $\deg(l')=1$
and $\deg(C_1')=a$. Furthermore, $L$ is generically non-reduced.
\end{thm}

\begin{proof}
 The theorem follows from \cite[Proposition $0.6$, Theorems $2.4, 3.1$]{mar1}.
\end{proof}

\begin{note}\label{cas4}
Denote by $S$ the ring $\Gamma_*(\mo_{\p3})=\oplus_{n \in \mb{Z}} \Gamma(\p3,\mo_{\p3}(n))$.
 Let ${C_0}$ be a plane curve of degree $a+1$ for some positive integer $a$, defined by two equations, say $l_1$, $l_2G_1$ where $l_1, l_2$ are linear polynomials and $G_1$ is a smooth 
 polynomial of degree $a$. 
 Let $X$ be a smooth surface of degree $d$ containing ${C_0}$. Then $X$ is defined by an equation of the form $F_X:=F_1l_1+F_2l_2G_1$. By taking $X$ to be a general surface containing ${C_0}$, 
 we can assume that $G_1, F_1$ and $F_2$ are not contained
 in the ideal generated by $l_1, l_2$. Denote by $l$ the line defined by $l_1$ and $l_2$ and by $A_l$ its coordinate ring. 
 The aim of this section to prove that $\mo_X(-l) \otimes \mo_X(-{C_0})$ is $d$-regular.
 \end{note}
 
 We now recall a result on Castelnuovo-Mumford regularity which we will use later:
 \begin{thm}[{\cite[Ex. $4$E, Proposition $4.16$]{syz1}}]\label{syz1}
  Suppose that \[0 \to M' \to M \to M'' \to 0\]is a short exact sequence of finitely generated $S$-modules. Denote by $\mr{reg}(N)$ the Castelnuovo-Mumford regularity of a finitely generated $S$-module $N$.
  Then, $\mr{reg}(M') \le \max\{\mr{reg}(M),\mr{reg}(M'')-1\}, \mr{reg}(M'') \le \max\{\mr{reg}(M),\mr{reg}(M')+1\}, \mr{reg}(M) \le \max\{\mr{reg}(M'),\mr{reg}(M'')\}.$ Furthermore,
  given a finitely generated $S$-module $M$, $\widetilde{M}$ is $\mr{reg}(M)$-regular, where $\widetilde{M}$ is the associated coherent sheaf.  
 \end{thm}

 \begin{para}
 Denote by $I_{C_0}$ (resp. $I_X$, $I_{l}$) the ideal of ${C_0}$ (resp. $X, l$). 
 Then we have the following commutative diagram:
 \begin{equation}\label{com9}
\begin{diagram}
    0&\rTo&I_X&\rTo^{f_1}&I_{C_0}&\rTo^{f_2}&I_{C_0}/I_X&\rTo&0\\
    & &\uTo^{\cong}&\circlearrowleft&\uOnto^{f_4}& \\
    & &S(-d)&\rTo^{f_3}&S(-1) \oplus S(-a-1)& \\
    & & & &\uInto^{f_5}&\\ 
    & & & &S(-2-a)&\\
   \end{diagram}
   \end{equation}
where $f_1$ is the natural inclusion map, $f_2$ is the quotient map, \[f_3:G \mapsto (GF_1,GF_2),\, \, f_4:(H_1, H_2) \mapsto H_1l_1+H_2l_2G_1,\, \, f_5:G \mapsto (Gl_2G_1,-Gl_1).\]
Note that the top horizontal row and the rightmost vertical sequence are exact. 
\end{para}

\begin{lem}\label{cas6}
 The kernel of the natural morphism $f'_5+f'_3$ from   $A_{l}(-2-a) \oplus A_{l}(-d)$ to $A_{l}(-1)\oplus A_{l}(-a-1)$ which
 maps $(\ov{G},\ov{H})$ to $(\ov{HF_1+Gl_2G_1},\ov{HF_2-Gl_1})$ is $A_{l}(-a-2) \oplus 0$, where $G, H \in S(-a-1), S(-d)$, 
 respectively and $\ov{G},\ov{H}$ are  its images in $A_l(-2-a)$ and $A_l(-d)$, respectively
\end{lem}

\begin{proof}
 Since $I_{l}$ is generated by $l_1$ and $l_2$, $\ov{HF_1+Gl_2G_1}$ and $\ov{HF_2-Gl_1}$ are both 
 zero if and only if both $HF_1$ and $HF_2$ are both in $I_{l}$. Since $F_1, F_2$ are not contained in $I_{l}$, by assumption,
 and $I_{l}$ is a prime ideal, this implies $H$ is contained in $I_{l}$. In other words, the kernel of the map $f'_5+f'_3$ is isomorphic to pairs $(\ov{G},0)$ where $G \in S(-a-2)$. 
 This proves the lemma.
\end{proof}

\begin{prop}\label{cas8}
$A_{l} \otimes_S I_{C_0}/I_X$ is $d+1$-regular.
\end{prop}
 
 \begin{proof}
 Using the exactness of the sequences in the diagram (\ref{com9}), we can conclude that $\Ima f_3+\Ima f_5$ is contained in $\ker(f_2 \circ f_4)$. We now show the converse inclusion. Let $h \in \ker(f_2 \circ f_4)$.
 So, $f_4(h) \in \Ima f_1$, which implies that there exists $b \in S(-d)$ such that $f_4(h-f_3(b))=0$. Therefore, there exists $c \in S(-2-a)$ such that $f_5(c)=h-f_3(b)$. This proves that 
 $\ker(f_2 \circ f_4) \subset \Ima f_3+\Ima f_5$, hence equality.
 
 Note that the image of $f_5$ maps to zero under $f_4$ due to exactness of the sequence. But, $f_4 \circ f_3$ is injective by the commutative of the above diagram and the injectivity of $f_1$. 
 So, $\Ima f_3 \cap \Ima f_5=0$. So, $\ker(f_4 \circ f_2)=S(2-a) \oplus S(-d)$.
 So, we have the following short exact sequence of $S$-modules:
 \begin{equation}\label{eqcas1}
  0 \to S(-2-a) \oplus S(-d) \xrightarrow{f_5\oplus f_3} S(-1)\oplus S(-a-1) \xrightarrow{f_4 \circ f_2} I_{C_0}/I_X \to 0
  \end{equation}
  Since tensor product is right exact, using Lemma \ref{cas6} we get the following exact sequence:
  \[0 \to A_{l}(-d) \xrightarrow{f'_3} A_{l}(-1)\oplus A_{l}(-a-1) \xrightarrow{f'_4 \circ f'_2} A_{l} \otimes_S I_{C_0}/I_X \to 0\]
  where $f'_i$ are induced by $f_i$ for $i=1,...,5$. 
  
  Using the short exact sequence \[0 \to S(-2) \to S(-1) \oplus S(-1) \to S \to A_l \to 0\] we see that the Castelnuovo-Mumford
  regularity of $A_{l}(-d)$ and $A_{l}(-1)\oplus A_{l}(-a-1)$ is less than or equal to $d$.
  Finally, Theorem \ref{syz1} implies that the Castelnuovo-Mumford regularity of $A_{l} \otimes_S I_{C_0}/I_X$ is at 
  most $d+1$.
 \end{proof}

\begin{thm}\label{cas12}
 The sheaf $\mo_X(-l-{C_0})$ is $d$-regular. 
\end{thm}

\begin{proof}
 Consider the natural surjective morphim $A_X \to A_l$.
 Tensoring this by $I_{C_0}/I_X$, we obtain the short exact sequence,
 \[0 \to \ker(p) \to I_{C_0}/I_X \xrightarrow{p} A_{l} \otimes_S I_{C_0}/I_X \to 0.\]
 Using Proposition \ref{cas8} and Theorem \ref{syz1} we conclude that $\widetilde{\ker(p)}$ is $d$-regular. 
 It remains to prove that $\widetilde{\ker{p}} \cong \mo_X(-C_0-l)$.

 Since $\Gamma_*(\mo_X(-C_0))=I_{C_0}/I_X$ (by definition) and $\Gamma_* \mo_l=A_l$, \cite[Proposition II.$5.15$]{R1}
 implies that $\widetilde{I_{C_0}/I_X}$ and $\widetilde{A_l}$ is isomorphic to $\mo_X(-C_0)$ and $\mo_l$, respectively. 
 Now, the associated module functor $\widetilde{ }$  is exact and commutes with tensor product (\cite[Proposition II.$5.2$]{R1}).
 Applying this functor to the last short exact sequence we get,
 \[0 \to \widetilde{\ker(p)} \to \mo_X(-C_0) \xrightarrow{\widetilde{p}} \mo_l \otimes \mo_X(-C_0) \to 0\]
 where $\widetilde{p}$ arises from tensoring by $\mo_X(-C_0)$ the natural surjective morphism
 $\mo_X \to \mo_l$.
 
 Consider now the short exact sequence \[0 \to \mo_X(-l) \to \mo_X \to \mo_l \to 0.\]
 Since $\mo_X(-C_0)$ is a flat $\mo_X$-module, we get the short exact sequence,
 \[0 \to \mo_X(-C_0-l) \to \mo_X(-C_0) \xrightarrow{\widetilde{p}} \mo_l \otimes_{\mo_X} \mo_X(-C_0) \to 0.\]
 By the universal property of the kernel, $\widetilde{\ker{p}}$ is isomorphic to $\mo_X(-C_0-l)$.
 Hence, $\mo_X(-C_0-l)$ is $d$-regular.
\end{proof}

\subsection{Generically non-reduced component of Hodge locus}

\begin{para}
 We will see in the proof of Theorem \ref{a73} that the hypotheses of Lemma \ref{dim} are also satisfied for the generically non-reduced component $L$ mentioned in Theorem \ref{a84} above.
Proposition \ref{a71} then tells us the Hodge locus corresponding to a general curve in $L$ is generically non-reduced.
\end{para}

\begin{thm}\label{a73}
 Let $d \ge 5$, $X$ be a smooth degree $d$ surface containing $2$ coplanar lines, $l_1, l_2$. Let $C$ be a divisor in $X$ of the form $2l_1+l_2$, $\gamma$ the cohomology class of $C$.
 Then, $\ov{\NL(\gamma)}_{\red}$ is isomorphic to $\ov{(\NL([l_1]) \cap \NL([l_2]))}_{\red}$. In particular, $\ov{\NL(\gamma)}$ is irreducible, generically non-reduced.
\end{thm}

\begin{proof} 
 Note that $\NL(\gamma)$ contains the space of smooth degree $d$ surfaces containing two coplanar lines. But this space is of codimension $2d-6$. So,
 $\codim \ov{\NL(\gamma)} \le 2d-6$. If $\codim \ov{\NL(\gamma)} < 2d-6$ \cite[Proposition $1.1$]{v3} implies that $\ov{\NL(\gamma)}$ is reduced and either it parametrizes smooth degree $d$ 
 surfaces containing a line or containing a conic. This means that there exists $\gamma'$ a class of a line or a conic in $X$ such that $\NL(\gamma)=\NL(\gamma')$.
 Note that if $\gamma'_{\prim}$ is a multiple of $[l_1]_{\prim}$ or $[l_2]_{\prim}$ or $[l_1+l_2]_{\prim}$ then this locus parametrizes surfaces such that the cohomology class of $l_1$ and $l_2$ remains of type $(1,1)$.
 As $h^1(\mo_X(l_1))$ and $h^0(\mo_X(l_2))$ vanish, $l_1 \mbox{ and } l_2$ are semi-regular, for $d \ge 5$. Theorem \ref{dim1} then implies this space parametrizes surfaces containing $2$ coplanar lines, hence $\codim \ov{\NL(\gamma)} = 2d-6$, which gives us a contradiction.
 
 Choose $X$ in $\NL(\gamma)$ such that if $H$ is the hyperplane containing $l_1 \cup l_2$ then $H \cap X=l_1 \cup l_2 \cup D$ with $D$ irreducible. Note that such $X$ exists and the new $\gamma$ corresponding
 to the cohomology class of the divisor $2l_1+l_2$ in this surface defines the same component $\ov{\NL(\gamma)}$.
 Denote by $E$ the preimage in $H^0(\mo_{\p3}(d))$ of the vector space $T_X(\NL(\gamma)) \subset H^0(\mo_X(d))$. For an integer $k$, denote by \[E_k:=[E:S_{d-k}]:=\{P \in S_k|P.S_{d-k} \subset E\},\] where 
 $S_{d-k}$ is $H^0(\mo_{\p3}(d-k))$. Now, \cite[$4.a.5$]{GH} implies that $I(l_1 \cup l_2)$ is contained in $\oplus_k E_k$. 
  Now, \cite[Proposition $4$]{c1} and the argument after \cite[Proposition $5$]{c1} tells us that $\gamma'=a_1[l_1]+a_2[l_2]+b[H_X]$, where $\gamma'$ is the class $[C']$ of a line or a conic
  and $a_1, a_2, a_3$ are rationals. 
  
  By the above argument, $C'$ is not $l_1, l_2$ or $l_1 \cup l_2$. Denote by $t_0$ and $t_1$ the integers $l_1.C'$ and $l_2.C'$, respectively. 
  Then, intersecting the equality $[C']=a_1[l_1]+a_2[l_2]+b[H_X]$ by $H_X, l_1, l_2$ and $C'$, respectively, we have
  \begin{eqnarray}
  \deg(C')&=&a_1+a_2+bd\\
  t_0&=&a_1(2-d)+a_2+b\\
  t_1&=&a_1+a_2(2-d)+b\\
  C'^2&=&a_1t_0+a_2t_1+\deg(C')b
  \end{eqnarray}
  Assume that $l_i\not| C'$. Using adjunction formula one can check $C'^2$ is $2-d$ if $C'$ is a line and $6-2d$ if $C'$ is a conic.
  Then, using any standard programming language (for eg. Maple), one can see that there does not exist a solution to these set of equations.
  
  The only case that remains is when $C'$ is a conic and of the form $l' \cup l_i$ for $i=1$ or $2$ and $l'$ is distinct from $l_1, l_2$. We can then replace in the above equation $C'$ by $l'$ and replace 
  $a_1$ (resp. $a_2$) by $a_1-1$ (resp. $a_2-1$) if $i=1$ (resp. $i=2$). Then the above result tells us again that there are no solutions. So, $\codim \ov{\NL(\gamma)}$ has to be $2d-6$.
    In particular, $\ov{\NL(\gamma)}_{\red}$ equals $\ov{(\NL([l_1]) \cap \NL([l_2]))}_{\red}$.
   Using Theorem \ref{cas12}, we have that $h^1(\mo_X(-C)(d))=0$,
    Finally, Proposition \ref{a71} implies $\ov{\NL(\gamma)}$ is generically non-reduced. This completes the proof of the theorem.

  %Then, $(2-a_1)[l_1]+(1-a_2)[l_2]=b[H_X]$.
  %We then have the following equalities intersecting this equality by $[l_1], [l_2]$ and $[H_X]$, respectively:
  %\begin{eqnarray}
  % b&=&(2-a_1)(2-d)+(1-a_2)\\
  % b&=&(2-a_1)+(1-a_2)(2-d)\\
  % bd&=&(2-a_1)+(1-a_2)
  %\end{eqnarray}
  %Substracting equation $(1)$ (resp. $(2)$) from $(3)$, we get
  %$b=1-a_2=2-a_1$. If $b \not=0$, then substituting in the first equality, we get $d=2$, which is a contradiction. So, $b=0$ implying $a_2=1$ and $a_1=2$.
  \end{proof}

\section{Generically non-reduced components of the Hilbert scheme of smooth curves}\label{mg}

  \begin{para}
     In this section we generalize the classical example of Mumford, to produce for any $d \ge 5$, new examples
     of \emph{generically non-reduced components} of the Hilbert schemes of smooth curves in $\p3$ contained in smooth degree $d$ surfaces of Picard number at least $3$ and 
     not contained in any surface of smaller degree (see Theorem \ref{phe3}). We use techniques developed in the previous sections.
     
     For an integer $d \ge 5$, we first give criterion under which a general element of the Hilbert scheme of curves
     is contained in a smooth degree $d$ surface (see Proposition \ref{mumsur21}). Then, we observe, if additionally
     the Hodge locus corresponding to a general element of this component is generically non-reduced, then so is the 
     component of the Hilbert scheme (see Theorem \ref{mumsur01}). We combine these results with Theorem \ref{a73} to 
     produce new explicit examples of non-reduced components of Hilbert schemes of smooth curves (see proof of Theorem \ref{phe3}).
  \end{para}
  
  \begin{prop}\label{mumsur21}
 Let $P_0$ be the Hilbert polynomial of a curve $C$ in $\p3$. Assume that there exists an integer $d$ and a  smooth degree $d$ surface, say $X$ containing $C$, such that $h^1(\mo_X(-C)(d))=0=h^0(\mo_X(-C)(d))$.
  Let $L$ be an irreducible component of $H_{P_0}$ containing $C$. Then, for a general element $D \in L$, $h^0(\I_{D}(d))>0$. Moreover, a general element $D \in L$ is contained in a smooth degree $d$ surface.
  \end{prop}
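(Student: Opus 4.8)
The plan is to analyse the first projection $\pr_1\colon H_{P_0,Q_d}\to H_{P_0}$ from the flag Hilbert scheme, whose set-theoretic image is exactly the locus of curves (with Hilbert polynomial $P_0$) lying on a degree $d$ surface, and to prove that $\pr_1$ is \emph{smooth} at the point $(C,X)$. Granting this, a smooth morphism is open, so $\pr_1$ maps a neighbourhood of $(C,X)$ onto a Zariski-open neighbourhood $U_C$ of $C$ in $H_{P_0}$; every curve parametrised by $U_C$ then lies on a degree $d$ surface. Since $L$ is an irreducible component through $C$, the intersection $L\cap U_C$ is a dense open subset of $L$, so a general $D\in L$ satisfies $h^0(\I_D(d))>0$. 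Recasting the problem as openness of the image has the pleasant feature that it is a statement in the whole of $H_{P_0}$, so the fact that $C$ may lie on several components causes no difficulty. Moreover, because $X$ is smooth and the smooth locus is open in the lifted family of surfaces, shrinking $U_C$ guarantees that the surface containing a general $D$ is smooth, as required by the statement.

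The key cohomological input is the vanishing $H^1(\p3,\I_C(d))=0$. To obtain it I would twist the ideal-sheaf sequence of the nesting $C\subset X\subset\p3$: since $\I_X\cong\mo_{\p3}(-d)$ and $\I_{C|X}\cong\mo_X(-C)$, twisting by $d$ yields
\[0 \to \mo_{\p3} \to \I_C(d) \to \mo_X(-C)(d) \to 0,\]
and the long exact sequence, together with $H^1(\mo_{\p3})=0$ and the hypothesis $h^1(\mo_X(-C)(d))=0$, forces $H^1(\I_C(d))=0$. (The companion hypothesis $h^0(\mo_X(-C)(d))=0$ gives $h^0(\I_C(d))=1$, i.e.\ $X$ is the \emph{unique} degree $d$ surface through $C$, so the fibre of $\pr_1$ over $C$ is a single reduced point and $\pr_1$ is in fact \'etale at $(C,X)$; this is consistent with the fact, implicit in the proof of Corollary \ref{hf12c}, that $\rho_C$ is an isomorphism, which by the diagram of Theorem \ref{hf12} makes $d\pr_1$ an isomorphism of tangent spaces. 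However, only the $H^1$-vanishing is needed to run the openness argument.)

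Smoothness of $\pr_1$ at $(C,X)$ I would verify via the infinitesimal lifting criterion. Given a small extension $0\to M\to A'\to A\to 0$ of Artinian local rings, a flat family $\tilde C_{A'}$ deforming $C$ over $A'$, and a compatible pair $(\tilde C_A,\tilde X_A)$ over $A$ with $\tilde X_A=V(F_A)$ for a section $F_A$ of $\I_{\tilde C_A}(d)$, I must extend $F_A$ to a section $F_{A'}$ of $\I_{\tilde C_{A'}}(d)$. The ideal sheaf of the flat family $\tilde C_{A'}$ is flat over $A'$, so tensoring with the extension identifies the obstruction to lifting $F_A$ with an element of $H^1(\I_C(d))\otimes_k M=0$; hence $F_{A'}$ exists. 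As $F_{A'}$ reduces to $F_X$, it is a nonzerodivisor, so $V(F_{A'})$ is a flat family of degree $d$ surfaces containing $\tilde C_{A'}$ and restricting to $\tilde X_A$. This is precisely the lift of $(\tilde C_A,\tilde X_A)$ demanded by the lifting criterion, so $\pr_1$ is formally smooth, hence smooth, at $(C,X)$.

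The step I expect to be the main obstacle is the deformation-theoretic heart of paragraph three: correctly identifying the obstruction to lifting the equation $F_A$ with a class in $H^1(\I_C(d))\otimes M$, and checking that the lifted equation still defines a \emph{flat} family of surfaces of relative degree $d$. Everything else — the cohomology computation, openness of a smooth morphism, and passage from ``open neighbourhood of $C$'' to ``general $D\in L$'' — is then formal.
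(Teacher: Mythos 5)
Your proof is correct, but it takes a genuinely different route from the paper's. The paper's argument is purely a semicontinuity count: from $0 \to \mo_X(-C)(d) \to \mo_X(d) \to i_*\mo_C(d) \to 0$ and the two vanishing hypotheses it gets $h^0(\mo_C(d)) = h^0(\mo_X(d)) < h^0(\mo_{\p3}(d))$, upper semicontinuity of $D \mapsto h^0(\mo_D(d))$ propagates the strict inequality to a neighbourhood of $C$ in $H_{P_0}$, and then the sequence $0 \to \I_D(d) \to \mo_{\p3}(d) \to j_*\mo_D(d) \to 0$ forces $h^0(\I_D(d)) > 0$; no deformation theory and no flag Hilbert scheme appear. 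You instead prove the stronger statement that $\pr_1 : H_{P_0,Q_d} \to H_{P_0}$ is smooth (indeed \'etale, since $h^0(\I_C(d))=1$) at $(C,X)$, using $H^1(\I_C(d))=0$ — which follows from the same hypotheses via $0 \to \mo_{\p3} \to \I_C(d) \to \mo_X(-C)(d) \to 0$ — and the infinitesimal lifting criterion, and then deduce the proposition from openness of the image. The deformation-theoretic heart of your argument is sound: the ideal sheaf of a flat family commutes with base change, the obstruction to lifting $F_A$ lives in $H^1(\I_C(d)) \otimes_k M = 0$, and a degree-$d$ form that is nonzero modulo the maximal ideal cuts out a flat family of hypersurfaces. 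What your approach buys is precisely the local-isomorphism statement for $\pr_1$ near $(C,X)$ that the paper has to re-derive separately inside the proof of Theorem \ref{mumsur01} (where it invokes $h^0(\I_C(d))-1=0$ together with Proposition \ref{mumsur21}); what it costs is the machinery — the paper's route is shorter, needs only $h^1(\mo_X(-C)(d))=0$ for the key inequality, and stays entirely at the level of cohomology of the special fibre. One small point of care on your side: the lifting criterion over all Artinian small extensions gives formal smoothness of the local ring map at $(C,X)$, and you should say explicitly that together with finite-typeness this yields smoothness, hence flatness and openness, in a Zariski neighbourhood; also note that, as in the paper, the conclusion $h^0(\I_D(d))>0$ by itself only puts $D$ on \emph{some} degree $d$ surface, and your extra shrinking step (restricting to the open locus of the flag Hilbert scheme where the surface is smooth before projecting) is what actually justifies the word ``smooth'' in the statement.
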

 
  \begin{proof}
   Since $X$ is a degree $d$ hypersurface, $\I_X \cong \mo_{\p3}(-d)$, hence
   $h^i(\I_X(d))=0$ for $i>1$. Since $h^1(\mo_X(-C)(d))=0$ and $h^3(\mo_X(-C))=0$ (Grothendieck's vanishing theorem), using the short exact sequence,
   \[0 \to \I_X(d) \to \I_C(d) \to \mo_X(-C)(d) \to 0\]
   we observe
   that $h^1(\I_C(d))=0$ and $h^3(\I_C(d))=0$. By the upper semi-continuity theorem
   there exists an open neighbourhood $U \subset H_{P_0}$ containing $C$ such that for all $u \in U$, the corresponding curve
   $\mc{C}_u$ satisfies, $h^1(\I_{\mc{C}_u}(d))=0=h^3(\I_{\mc{C}_u}(d))$,  hence, the Euler characteristic $\chi(\I_{\mc{C}_u}(d))=h^0(\I_{\mc{C}_u}(d))+h^2(\I_{\mc{C}_u}(d))$.
   
    Using upper semi-continuity theorem again, there exists an open subset $V \subset U$ containing $C$ such that for all $v \in V$,
    $h^2(\I_{\mc{C}_v}(d)) \le h^2(\I_{C}(d))$ and $h^0(\I_{\mc{C}_v}(d)) \le h^0(\I_{C}(d))$. Since the Euler characteristic is constant in families, we have \[h^0(\I_C(d))\ge h^0(\I_{\mc{C}_v}(d))=\chi(\I_{\mc{C}_v}(d))-h^2(\I_{\mc{C}_v}(d)) \ge \chi(\I_{C}(d))-h^2(\I_{C}(d))=h^0(\I_C(d)).\]
    Therefore, $h^0(\I_C(d))=h^0(\I_{\mc{C}_v}(d))$ for all $v \in V$. Since $h^0(\I_{C}(d))>0$, we conclude for all $v \in V$, $h^0(\I_{\mc{C}_v}(d))>0$.
        
   We now show a general element in $L$ can be embedded into a \emph{smooth} degree $d$-surface. 
   Since the fiber to the projection morphism $\pr_2:H_{P_0,Q_d} \to H_{P_0}$ over any point $D \in H_{P_0}$ is isomorphic
   $\mb{P}(I_d(D))$, one can check that there exists an irreducible component, say $H$ of $H_{P_0,Q_d}$ mapping 
   surjectively to $L$ whose generic fiber is isomorphic to $\mb{P}(I_d(C_g))$ for $C_g \in L$, generic.
   By the upper-semicontinuity of fiber dimension, the dimension of the fiber over $C$ to the morphism $\pr_2|_H$ is 
   of dimension at least $h^0(\I_d(C_g))-1$, which by the above argument forces it to be $h^0(\I_C(d))-1$. Since 
   the fiber $\pr_2^{-1}(C)$ is irreducible of dimension $h^0(\I_C(d))-1$, this means $\pr_2|_H^{-1}(C)=\mb{P}(I_d(C))$.
   Since $C$ is contained in a \emph{smooth} degree $d$ surface and a general element of $L$ is contained in a degree $d$ surface, it follows that a general element of $L$ is contained in a \emph{smooth} degree $d$ surface (use \cite[III. Ex. $10.2$]{R1}).
   The proposition then follows.
   \end{proof}

We recall a result due to Bloch which gives a relation between the Hodge locus corresponding to the cohomology class of a semi-regular curve and the corresponding flag Hilbert scheme.
\begin{thm}[{\cite[Theorem $7.1$]{b1}}]\label{b71}
 Let $X \to S$ be a smooth projective morphism with $S=\Spec(\mb{C}[[t_1,...,t_r]])$. Let $X_0 \subset X$ be the
 closed fiber and let $Z_0 \subset X_0$ be a local complete intersection subscheme of codimension $1$. Suppose that the
 topological cycle class $[Z_0] \in H^2(X_0,\mb{Z})$ lifts to a horizontal class $z \in F^1H^2(X)$ (where $F^\bullet$
 refers to the Hodge filtration) and that $Z_0$ is semi-regular in $X_0$. Then, $Z_0$ lifts to a local complete intersection subscheme $Z \subset X$.
\end{thm}

This result implies the following:

\begin{thm}\label{dim1}
 Let $X$ be a surface, $C$ be a semi-regular curve in $X$ and $\gamma \in H^{1,1}(X,\mb{Z})$ be the class of  $C$. 
 For any irreducible component $L'$ of $\overline{\NL(\gamma)}$
 there exists an irreducible component $H'$ of $H_{P,Q_{d_{\red}}}$ containing the pair $(C,X)$ such that  
 $\pr_2(H')$  coincides with $L'_{\red}$, where $\pr_2$ is  the second projection map from $H_{P,Q_d}$ to $H_{Q_d}$. 
\end{thm}

\begin{proof}
Using basic deformation theory, one can check that the image under $\pr_2:H_{P,Q_d} \to H_{Q_d}$ of all the irreducible components of $H_{{P,Q_d}_{\red}}$ containing the pair $(C,X)$
is contained in $\ov{\NL(\gamma)}$. But, Theorem \ref{b71} proves the converse i.e., $\ov{\NL(\gamma)}_{\red}$ is contained in 
$\pr_2(H_{P,Q_d})$. This proves the theorem.
\end{proof}

 The following result will help  us determine when a curve is semi-regular.

\begin{lem}\label{hf11}
Let $C$ be a connected reduced curve and $X$ a smooth degree $d$ surface containing $C$. Then, $H^1(\mo_X(-C)(k))=0$ for all $k \ge \deg(C)$. In particular, of $d \ge \deg(C)+4$ then $h^1(\mathcal{O}_X(C))=0$, hence $C$ is semi-regular.
\end{lem}

\begin{proof}
Since $X$ is a hypersurface in $\p3$ of degree $d$, $\I_X \cong \mo_{\p3}(-d)$.  Consider the short exact sequence:
\[0 \to \I_X \to \I_C \to \mo_X(-C) \to 0.\]
We get the following terms in the associated long exact sequence:
\[... \to H^1(\I_C(k)) \to H^1(\mo_X(-C)(k)) \to H^2(\I_X(k)) \to ...\]
Now, $H^2(\mo_{\p3}(k-d))=0$ (see \cite[Theorem $5.1$]{R1}). Now, $\I_{C}$ is $\deg(C)$-regular (see %for example \cite[Theorem $2.1$]{gi1} combined with 
\cite[Main Theorem]{gi1}). 
So, $H^1(\I_C(k))=0$ for $k \ge \deg(C)$. This implies $H^1(\mo_X(-C)(k))=0$ for $k \ge \deg(C)$. 
By Serre duality, $0=H^1(\mo_X(-C)(d-4)) \cong H^1(\mo_X(C))$. So, $C$ is semi-regular.
\end{proof}

Using these results we can show the following theorem: 
   \begin{thm}\label{mumsur01}
   Let $X$ be a smooth degree $d$ surface, $\gamma \in H^{1,1}(X,\mb{Z})$ and $C$ be a smooth semi-regular curve in $X$. Assume that,
   $\gamma-[C]$ is a multiple of the class of the hyperplane section $H_X$, where $[C]$ is the cohomology class of $C$. Let $P_0$ be the Hilbert polynomial of $C$.
   If $h^1(\mo_X(-C)(d))=0=h^0(\mo_X(-C)(d))$ and $\ov{\NL(\gamma)}$ is irreducible, generically non-reduced then
      there is an irreducible generically non-reduced component of $H_{P_0}$ containing $C$ parametrizing smooth curves in $\p3$
      contained in a smooth surface of degree $d$ but not in any surface of smaller degree.
     \end{thm}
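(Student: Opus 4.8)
The plan is to combine the tangent-space computation from Corollary \ref{hf12c}, the component-matching statement of Theorem \ref{dim1}, and the containment result of Proposition \ref{mumsur21}, transporting the non-reducedness of $\ov{\NL(\gamma)}$ across to the Hilbert scheme $H_{P'}$ via the flag Hilbert scheme $H_{P',Q_d}$ and its two projections. The key observation making this work is Note \ref{nl1}: since $\gamma-[C]$ is a multiple of $H_X$ and the hyperplane class stays of type $(1,1)$ throughout the family, we have $\NL(\gamma)=\NL([C])$, so the hypothesis on $\ov{\NL(\gamma)}$ is really a hypothesis on $\ov{\NL([C])}$, which is the locus governing deformations of the pair $(C,X)$.

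First I would apply Theorem \ref{dim1} to the semi-regular curve $C\subset X$ with class $[C]$: for the given generically non-reduced irreducible component $L':=\ov{\NL(\gamma)}=\ov{\NL([C])}$ there is an irreducible component $H'$ of $(H_{P',Q_d})_{\red}$ containing $(C,X)$ with $\pr_2(H')=L'_{\red}$. Next I would set $H_\gamma$ to be the corresponding (scheme-theoretic) component of $H_{P',Q_d}$ and use the criterion implicit in the introduction: $H_\gamma$ is non-reduced exactly when $\ov{\NL(\gamma)}$ is, because the second projection identifies their reduced structures while the tangent space to the flag Hilbert scheme at $(C,X)$ surjects (under the hypothesis $h^1(\mo_X(-C)(d))=0=h^0(\mo_X(-C)(d))$, which via Corollary \ref{hf12c} makes $\rho_C$ an isomorphism) onto the tangent space of $\NL([C])$ at $X$. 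Concretely, the vanishing hypotheses force the horizontal arrow in the diagram of Theorem \ref{hf12} to compute $T_X(\NL([C]))$ as $h^0(\N_{C|\p3})-h^0(\N_{C|X})$, and one checks that the fibre dimension of $\pr_2$ over a general point matches $h^0(\N_{C|X})$, so that $\dim T_{(C,X)}H_\gamma$ exceeds $\dim H_\gamma$ precisely when $\dim T_X\ov{\NL(\gamma)}$ exceeds $\dim\ov{\NL(\gamma)}$. This is where the generic non-reducedness of $\ov{\NL(\gamma)}$ gets transported to $H_\gamma$.

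Then I would pass to the first projection $\pr_1\colon H_{P',Q_d}\to H_{P'}$. The point requiring Proposition \ref{mumsur21} is that $\pr_1(H_\gamma)$ is genuinely an irreducible \emph{component} of $H_{P'}$ and not merely a subvariety sitting inside a larger family: Proposition \ref{mumsur21}, applied to the Hilbert polynomial $P_0=P'$ and the component $L$ of $H_{P'}$ containing $C$, shows that a general element $D$ of $L$ is still contained in a degree $d$ surface, i.e. $h^0(\I_D(d))>0$. This rigidity — that the generic curve cannot escape from lying on a degree $d$ surface — is what pins $\pr_1(H_\gamma)$ down as a full component and lets me conclude that $\pr_1$ restricts to a birational (indeed, the fibre of $\pr_1$ over a general curve $D$ records the choice of degree $d$ surface through $D$, which under the vanishing hypotheses is essentially unique or of the expected dimension) map of $H_\gamma$ onto this component. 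Finally, since non-reducedness is preserved along this generically finite dominant map between the components and a general element of $\pr_1(H_\gamma)$ is a smooth curve (because the smooth locus is open and $C$ itself is smooth), I obtain a generically non-reduced irreducible component of $H_{P'}$ containing $C$ and parametrizing smooth curves.

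The main obstacle I expect is the transfer of the non-reduced (that is, nilpotent) scheme structure across the two projections $\pr_2$ and $\pr_1$, since Theorem \ref{dim1} only gives an isomorphism on \emph{reduced} schemes. The delicate part is therefore the tangent-space bookkeeping: one must verify that the embedded tangent space $T_{(C,X)}H_\gamma$ is strictly larger than $\dim H_\gamma$, which amounts to comparing the fibre dimension of $\pr_2$ (controlled by $h^0(\N_{C|X})$) and of $\pr_1$ (controlled by the dimension of the linear system of degree $d$ surfaces through $C$, i.e. $h^0(\I_C(d))$) against the actual dimensions of the components, and then checking that the excess witnessed at $X\in\ov{\NL(\gamma)}$ propagates unchanged to $C\in\pr_1(H_\gamma)$ rather than being absorbed by a jump in fibre dimension. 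The vanishing $h^1(\mo_X(-C)(d))=0=h^0(\mo_X(-C)(d))$ is precisely what rigidifies these fibres and makes the dimension count clean, so the proof hinges on invoking it at exactly the right places.
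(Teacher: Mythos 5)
Your proposal is correct and follows essentially the same route as the paper's proof: reduce to $\NL([C])$, invoke Theorem \ref{dim1} to obtain a flag Hilbert scheme component, use the vanishing hypotheses together with Proposition \ref{mumsur21} to make $\pr_1$ a local isomorphism onto a genuine component of $H_{P'}$, and then compare tangent-space dimension against actual dimension via Corollary \ref{hf12c} and the fibre-dimension count for $\pr_2$. The only cosmetic difference is that the paper measures the tangent-space excess directly on $H_{P'}$ (showing $h^0(\N_{C|\p3})>\dim \pr_1(H_\gamma)$ at a general point) rather than first on the flag Hilbert scheme and then transporting it across $\pr_1$; under the stated vanishing hypotheses these are the same numbers.
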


     \begin{proof}
     Since $\gamma-[C]$ is a multiple of the hyperplane section $H_X$, $\ov{\NL(\gamma)}$ is (scheme-theoretically) isomorphic
     to $\ov{\NL([C])}$. Hence, $\ov{\NL([C])}$ is generically non-reduced. 
     
     Since $C$ is semi-regular, Theorem \ref{dim1} implies that there exists an irreducible component $H_\gamma$ of $H_{{P_0,Q_d}_{\red}}$ such that $\pr_2(H_{\gamma})$ is isomorphic to $\NL([C])_{\red}$. %Theorem 4.3 D2     
     Denote by $L_\gamma:=\pr_1(H_{\gamma})$. By Lemma \ref{dim}, \[\dim \ov{\NL([C])}=\dim L_\gamma+\dim I_d(C_g)-h^0(\mo_{X_g}(C_g))\]for $(C_g,X_g) \in H_\gamma$, general.
     Now, Corollary \ref{hf12c} implies \[\dim T_{X_g}\NL([C])=\dim \pr_2T_{(C_g,X_g)}H_{P_0,Q_d}=\dim T_{(C_g,X_g)}H_{P_0,Q_d}-\dim \ker \beta_{C_g}.\]
     As $h^1(\mo_X(-C)(d))=0$, by the upper-semicontinuity theorem, for a general $(C_g,X_g) \in H_\gamma$, $h^1(\mo_{X_g}(-C_g)(d))=0$. Hence, Corollary \ref{dim4}
     implies $\pr_1T_{(C_g,X_g)}H_{P_0,Q_d} \cong H^0(\N_{C_g|\p3})$. By the normal short exact sequence, $\ker \beta_{C_g}=h^0(\N_{C_g|X_g})$. Therefore,
     \[\dim T_{X_g}\NL([C])=\dim T_{(C_g,X_g)}H_{P_0,Q_d}-h^0(\N_{C_g|X_g})=\]\[=\dim \pr_1T_{(C_g,X_g)}H_{P_0,Q_d}+\ker \rho_{C_g}-h^0(\N_{C_g|X_g})\]
     which is equal to $h^0(\N_{C_g|\p3})+h^0(\mo_{X_g}(-C_g)(d))-h^0(\N_{C_g|X_g})$. Using Lemma \ref{dim3}, we get
     \[\dim T_{X_g}(\NL([C]))-\dim \ov{\NL([C])}=h^0(\N_{C_g|\p3})-\dim L_\gamma.\] This implies $h^0(\N_{C_g|\p3})>\dim L_\gamma$ for a general $C_g \in L_\gamma$ because $\NL([C])$ is generically non-reduced.
     Since $L_\gamma$ is an irreducible component of $(H_{P_0})_{\red}$, the corresponding component of $H_{P_0}$, say $L'$
     is generically non-reduced. Since $C$ is smooth, a curve corresponding to a general closed point on $L'$ is smooth (see \cite[Ex. III.$10.2$]{R1}).
          %Now, Proposition \ref{mumsur02} implies that $C'$ is semi-regular. Then, 
     %\cite[Theorem $4.3$]{D2} states that there exists an irreducible component $H$ of $H_{P_0,Q_d}$ such that $\pr_2(H)$ is isomorphic to 
     %$\ov{\NL(\gamma_2)}$. So, $H$ is generically non-reduced.
     
     It just remains to prove that a general element of $L'$ is not contained in a surface of degree smaller than $d$.
     For a general element $C_g \in L'$ contained in a smooth degree $d$ surface $X_g$, we have the following
     short exact sequence for all $k$:
     \[0 \to \I_{X_g}(k) \to \I_{C_g}(k) \to \mo_{X_g}(-C_g)(k) \to 0.\]
     For $k<d$, $h^i(\I_{X_g}(k))=h^i(\mo_{\p3}(k-d))=0$ for $i=0,1$. Hence, $h^0(\I_{C_g}(k))=h^0(\mo_{X_g}(-C_g)(k))$ for 
     $k<d$. Consider now the short exact sequence,
     \[0 \to \mo_{X_g}(-C_g)(k-1) \xrightarrow{p} \mo_{X_g}(-C_g)(k) \to \coker(p) \to 0.\]
     Since $h^0(\mo_{X_g}(-C_g)(d))=0$ one can recursively prove that $h^0(\mo_{X_g}(-C_g)(k))=0$ for any $k<d$.
     Therefore, $h^0(\I_{C_g}(k))=0$ for any $k<d$. This completes the proof of the theorem.
          \end{proof}

     The following technical lemma will be used in the proof of Theorem \ref{phe3}.
     \begin{lem}\label{mumsur05}
      Notations as in Theorem \ref{a73}. Let $d \ge 5$. The Castelnuovo-Mumford regularity of $\mo_X(2l_1+l_2)$ is at most $2d-5$.
     \end{lem}
    
 \begin{proof}
  Consider the short exact sequence, \[0 \to \mo_X(-l_1-l_2) \to \mo_X \to \mo_{l_1+l_2} \to 0.\]
     \cite[Ex. III.$5.5$]{R1} implies that for all $k \in \mb{Z}$, the induced map $H^0(\mo_X(k)) \to H^0(\mo_{l_1+l_2}(k))$ is surjective and $H^1(\mo_X(k))=0$.
     So, \[0=H^1(\mo_X(-l_1-l_2)(k))\stackrel{{\small \mbox{SD}}}{=}H^1(\mo_X(l_1+l_2)(d-4-k))^\vee, \, \forall k \in \mb{Z}.\]
     In other words, $H^1(\mo_X(l_1+l_2)(-k+d-4))=0$ for all $k \in \mb{Z}$. Now, $H^2(\mo_X(l_1+l_2)(k))\stackrel{{\small \mbox{SD}}}{=}
     H^0(\mo_X(-l_1-l_2)(-k+d-4))$ is zero if the degree of $\mo_X(-l_1-l_2)(-k+d-4)$ is less than zero, which happens if $k>d-6$. 
     Hence, the Castelnuovo-Mumford regularity of $\mo_X(l_1+l_2)$ is at most $d-4$.
     
      Using Serre duality, we can conclude \[H^1(\mo_{l_1} \otimes \mo_X(2l_1+l_2)(k))^\vee=H^0(\mo_{l_1} \otimes \mo_X(-2l_1-l_2)(-k)(-2)).\]
     Now, $\deg(\mo_{l_1} \otimes \mo_X(-2l_1-l_2)(-k)(-2))=l_1(-2l_1-l_2-(k+2)H_X)=-2(2-d)-1-(k+2)=2d-7-k$ where the second last equality 
     follows from $l_1^2=2-d$ which can be computed using the adjunction formula. Therefore, for $k>2d-7$, $H^1(\mo_{l_1} \otimes \mo_X(2l_1+l_2)(k))=0$,
     whence the Castelnuovo-Mumford regularity of $\mo_{l_1} \otimes \mo_X(2l_1+l_2)$ is at most $2d-5$.
     
     Finally, using the short exact sequence:\[0 \to \mo_X(l_1+l_2) \to \mo_X(2l_1+l_2) \to \mo_{l_1} \otimes \mo_X(2l_1+l_2) \to 0\]
     we observe that the Castelnuovo-Mumford regularity of $\mo_X(2l_1+l_2)$ is at most $\max\{2d-5,d-4\}$, which is 
     $2d-5$ as $d \ge 5$. This completes the proof of the lemma.
 \end{proof}

     We finally arrive at the proof of the final theorem of the article.

 \begin{proof}[Proof of Theorem \ref{phe3}]
   Let $\gamma'$ be the cohomology class of $C$.
    Theorem \ref{a73} states that $\ov{\NL(\gamma')}$ is irreducible, generically non-reduced. 
    By Lemma \ref{mumsur05}, $\mo_X(2l_1+l_2)$ is $2d-5$-regular.
     Hence, $\mo_X(C)(m-1)$ is globally generated for $m \ge 2d-4$ and a general element of the linear system 
     $|\mo_X(C)(m)|$ is semi-regular. Then, \cite[Ex. II. $7.5$(d)]{R1}
  states that $\mo_X(C)(m)$ is very ample. Therefore, Bertini's theorem implies that a general curve $C'$ in the linear system corresponding to $\mo_X(C)(m)$ 
  is smooth and semi-regular for $m \ge 2d-4$. 
  
  Furthermore, $h^1(\mo_X(-C')(d))=h^1(\mo_X(-C)(d-m))=h^1(\mo_X(C)(m-4))=0$ as $m-4 \ge 2d-6$. 
  Since $\deg(\mo_X(-C)(d-m))<0$ for  $m \ge 2d-2$, $h^0(\mo_X(-C')(d))=h^0(\mo_X(-C)(d-m))=0$.
   Denote by $P'$ the Hilbert polynomial of $C'$. 
    Then, Theorem \ref{mumsur01} implies that there exists an irreducible generically non-reduced component, say $L'$ of $H_{P'}$ containing $C'$ and parametrizing smooth curves
    contained in a smooth surface of degree $d$ but not in any surface of smaller degree.

    Clearly, $\deg(C')=md+3$ and by the adjunction formula, 
    \begin{eqnarray*}
     \rho_a(C')&=&1+(1/2)(C'^2+(d-4)\deg(C'))\\
      &=&1+(1/2)(C^2+m^2d+2m\deg(C)+(d-4)(md+3))\\
     &=&1+(1/2)(4l_1^2+l_2^2+4+md^2+d(m^2-4m+3)-12+6m)\\
     &=&1+(1/2)(4(2-d)+(2-d)+md^2+d(m^2-4m+3)-8+6m)\\
     &=&1+(1/2)(md^2+d(m^2-4m-2)+6m+2).
    \end{eqnarray*}
    It is easy to observe that in this case, $\NL(\gamma')$ parametrizes surfaces containing two coplanar lines, which are not (global) complete intersections in $X_g$.
    This directly implies the sublattice of the Picard lattice generated by the cohomology classes of the two coplanar lines and the hyperplane section is of rank $3$.
     This completes the proof of the theorem.
 \end{proof}

\end{document}